\newcommand{\baray}{\begin{array}{rcl}}
\newcommand{\earay}{\end{array}}
\newcommand{\barray}{\begin{array}{rcl}}
\newcommand{\earray}{\end{array}}
\newcommand{\mL}{\mathfrak{L}}
\newcommand{\mX}{\mathfrak{X}}
\newcommand{\mZ}{\mathfrak{Z}}
\newcommand{\mC}{\mathfrak{C}}
\newcommand{\mG}{\mathfrak{G}}
\newcommand{\mF}{\mathfrak{F}}
\newcommand{\mT}{\mathfrak{T}}
\newcommand{\mS}{\mathfrak{S}}
\newcommand{\levy}{L\'evy }
\newcommand{\bcase}{\begin{cases}}
\newcommand{\ecase}{\end{cases}}
\newcommand\cadlag{c{\'a}dl{\'a}g }
                       \newcommand\del[1]{}
\newcounter{ggg}
\def\Re{\mathop{ \rm Re }}
\newcommand{\n}{|}
\newcommand{\lk}{\left}
\newcommand{\lqq}{\lefteqn}
\newcommand{\rk}{\right}
\newcommand{\LL}{{\rm I \kern -0.2em L}}
\newcommand{\be} {\begin{enumerate} }
\newcommand{\ee} {\end{enumerate} }
\newcommand{\CO}{{{ \mathcal O }}}
\newcommand{\FG}{{{\mathfrak{G} }}}
\newcommand{\CZ}{{{ \mathcal Z }}}
\newcommand{\CT}{{{S}}}
\newcommand{\CB}{{{ \mathcal B }}}
\newcommand{\CM}{{{ \mathcal M }}}
\newcommand{\BF}{{{ \mathbb{F} }}}
\newcommand{\CF}{{{ \mathcal F }}}
\newcommand{\CL}{{{ \mathcal L }}}
\newcommand{\RR}{{\mathbb{R}}}
\newcommand{\Rb}[1]{{\mathbb{R}_{#1}}}
\newcommand{\DD}{\mathbb{D}}
\newcommand{\NN}{\mathbb{N}} 
\newcommand{\PP}{{\mathbb{P}}}
\newcommand{\EE}{ \mathbb{E} }
\newcommand{\TT}{{\rm I \kern -0.2em T}}
\newcommand{\DEQS}{\begin{eqnarray*}}
\newcommand{\EEQS}{\end{eqnarray*}}
\newcommand{\DEQSZ}{\begin{eqnarray}}
\newcommand{\EEQSZ}{\end{eqnarray}}
\newcommand{\DEQ}{\begin{eqnarray}}
\newcommand{\EEQ}{\end{eqnarray}}
\begin{document}

\begin{frontmatter}



\title{Global  solutions to
stochastic Volterra  equations driven by L\'evy noise\footnote{This work was supported by the FWF-Project P17273-N12 and Marsden Fund project number UOO1418.}
}




\author{Erika Hausenblas}
\address{Department of Mathematics, Montanuniversitaet Leoben, 8700 Leoben,
 Austria}
\ead{erika.hausenblas@unileoben.ac.at}

\author{Mih\'aly Kov\'acs}
\address{Department of Mathematical Sciences, Chalmers University of Technology and University of Gothenburg, Sweden}
\ead{mihaly@chalmers.se}




%
%

\begin{abstract}
In this paper we investigate the existence and uniqueness of semilinear stochastic Volterra equations driven by multiplicative L\'evy noise of pure jump type. In particular, we consider the
equation
\begin{equation*}
\hspace{-0.5cm}
\left\{ \begin{aligned} du(t) & = \lk( A\int_0 ^t b(t-s) u(s)\,ds\rk) \, dt  + F(t,u(t))\,dt \\
& {} + \int_ZG(t,u(t), z) \tilde \eta(dz,dt)  +
\int_{Z_L}G_L(t,u(t), z) \eta_L(dz,dt) ;\,  t\in (0,T],\\
u(0)&=u_0. \end{aligned} \right.
\end{equation*}
 where $Z$ and $Z_L$ are Banach spaces,  $\tilde \eta$ is a time-homogeneous compensated Poisson random measure on $Z$ with intensity measure $\nu$ (capturing the small jumps), and
 $\eta_L$ is a time-homogeneous
Poisson random measure  on $Z_L$ independent to $\tilde{\eta}$ with finite intensity measure $\nu_L$ (capturing the large jumps).
 Here, $A$ is a selfadjoint operator on a Hilbert space $H$, $b$ is a scalar memory  function and $F$, $G$ and $G_L$ are nonlinear mappings. We provide conditions on $b$, $F$
 $G$ and $G_L$ under which a unique global solution exists. We also present an example from the theory of linear viscoelasticity where our result is applicable. \end{abstract}


\begin{keyword}
 {stochastic integral of jump type \sep
stochastic partial differential equation \sep Poisson random measure \sep L\'evy process\sep
Volterra  equation \sep stochastic delay equation.}



\textbf{AMS subject classification (2002):} {Primary 60H15;
Secondary 60G57.}

\end{keyword}




\end{frontmatter}




\newtheorem{proof}{Proof}[section]
\newtheorem{theorem}{Theorem}[section]
\newtheorem{notation}{Notation}[section]
\newtheorem{claim}{Claim}[section]
\newtheorem{lemma}[theorem]{Lemma}
\newtheorem{corollary}[theorem]{Corollary}
\newtheorem{example}[theorem]{Example}
\newtheorem{assumption}[theorem]{Assumption}
\newtheorem{tlemma}{Technical Lemma}[section]
\newtheorem{definition}[theorem]{Definition}
\newtheorem{remark}[theorem]{Remark}
\newtheorem{hypo}{H}
\newtheorem{proposition}[theorem]{Proposition}
\newtheorem{Notation}{Notation}
\renewcommand{\theNotation}{}
\newtheorem{ass}[theorem]{Assumption}

\renewcommand{\labelenumi}{\alph{enumi}.)}


\section{Introduction}\label{sec_intro}

In this work we analyse the existence and uniqueness
of a class of stochastic Volterra equations driven by L\'evy noise.
To be more precise, let $Z$ and $Z_L$ be two Banach   spaces,  $\tilde \eta$ be a compensated Poisson random measure on $Z$ with intensity measure $\nu$, and $\eta_L$ be a
Poisson random measure on $Z_L$ independent to $\tilde{\eta}$ with finite intensity measure $\nu_L$.
With given mappings $G, G_L$ and $F$ and initial data $u_0$, satisfying certain conditions specified later,
we are interested in the solution of the equation
\begin{equation}\label{SDE-intro2} %
\hspace{-0.5cm}
\left\{ \begin{aligned} du(t) & = \lk( A\int_0 ^t b(t-s) u(s)\,ds\rk) \, dt  + F(t,u(t))\,dt \\
& {} + \int_ZG(t,u(t), z) \tilde \eta(dz,dt)  +
\int_{Z_L}G_L(t,u(t), z) \eta_L(dz,dt) ;\,  t\in (0,T],\\
u(0)&=u_0. \end{aligned} \right.
\end{equation}
%
%
The examples that we have in mind that can be described via \eqref{SDE-intro2} are linear models of viscoelastic materials perturbed by random abrupt forcing. Therefore, we impose conditions on the memory kernel $b$ that are typical in that setting, see \cite[Chapter 5]{pruss} for more details.   We also would like to mention that Volterra equations, especially with the specific kernel $b(t)=c_{\rho}t^{\rho-2}$, $1<\rho<2$, are an important modeling tool in many fields of sciences, such as mechanical engineering (including viscoelasticity) \cite{atan,mainardi2,mainardi,mainardi1}, chemistry \cite{chemical}, physics (heat conduction) \cite{yuri3,yuri1}, neurology \cite{neuro1,neuro} and several other fields \cite{frac0,frac}.\\

Gaussian perturbations are not always appropriate for interpreting real data in a
reasonable way. This is the case when, for example, one wants to model abrupt pulses by a random perturbation process or when the time scale of the random process is much finer then the time scale of the deterministic process or when {extreme events} occur relatively frequently. In all these cases, a natural mathematical modeling framework could be based on L\'evy processes or general semimartingales with jumps.

%
%
Existence and uniqueness of solutions for Gaussian noise driven Volterra equations in various frameworks have been treated by several authors, see, for example, \cite{baeumeretall,Barbu,bona1,BoDP,BoFa,bona2,CDaPP,DLO,Karczewskabook,Karczewska2012,Karczewska2009,Keck} for an incomplete list of papers and the references therein. For an additive fractional Brownian motion driven linear parabolic stochastic Volterra equation we refer to \cite{Sperlich} while for linear additive square-integrable local martingale driven  stochastic Volterra equations we refer \cite{SchV}. Finally, we mention \cite{DLO1} where the regularity (but not pathwise) of a class of linear stochastic Volterra equations were investigated in an additive Poisson random measure setting but with different smoothing properties of the deterministic solution operator than the one in the present paper (as subdiffusion, rather than dissipative waves in viscoelastic materials as in the present paper, was considered). As far as we know nonlinear stochastic Volterra equations with multiplicative non-Gaussian noise have not been analysed in the literature.

The main results of the paper are Theorem \ref{local_ex} and Theorem \ref{global}. In Theorem \ref{local_ex} we prove, using a fixed point argument, that \eqref{SDE-intro2} has a unique mild solution with certain \cadlag path regularity when $G_L=0$ and $G$ satisfy some integrability conditions with respect to $\nu$. In Theorem \ref{global} we show that a global mild solution with the same \cadlag path regularity to \eqref{SDE-intro2} still exists when $G_L \neq 0$ and the finite measure $\nu_L$ satisfy a mild condition at infinity.  

\medskip

The paper is organized as follows. First, in Section \ref{deterministic} we introduce some necessary preliminary material and also state our main assumption on the memory kernel
and on the operator $A$. Then, in Section \ref{sec:bddmom}, we
investigate the existence and uniqueness of a mild solution of \eqref{SDE-intro2} with $G_L=0$ and $G$ satisfying some integrability conditions with respect to $\nu$. In Section \ref{sec:ubddj} we extend this result by allowing $G_L\neq 0$ but no integrability condition is assumed on $G_L$ as we take $\nu_L$ to be a finite measure.
Here we will use the representation of the L\'evy process or compound Poisson process associated with $\eta_L$ in terms of a sum over all its jumps.
If there would be no memory term, one would simple glue together the solutions between the large jumps. However, if there is a non-trivial memory term, the solution
is not a Markov process, hence, it is not possible to apply the gluing technique.  We deal with this difficulty by introducing special Hilbert spaces, taking into account
the jump times and sizes. In Section \ref{reaction1-s-t} we present an example describing the velocity field of a synchronous viscoelastic material in the presence of an abrupt force field modeled by space-time L\'evy noise.

Finally, we note that we use Poisson random measures to describe the stochastic perturbation for two reasons.
Firstly, using a notation based on Poisson random measures simplify many calculations and,
secondly, the use of Poisson random measures allow for a more general framework than L\'evy jump processes. For a precise connection between Poisson random measures and L\'evy processes and integration with respect to these we refer to \cite{dettweiler}, \cite[Appendix]{mishi5}, \cite[Chapters 6-8]{PESZAT+ZABZCYK} and \cite[Chapter 4]{sato}.

\del{
\begin{notation}
We denote by $B_{p,q}^ s(\RR^d)$ the natural Besov spaces defined in \cite[Chapter 1.2.5, p.\ 8]{triebelII} or \cite[Definition 2, p.\ 8]{runst}.
For $k\in\NN_0$ we denote by $H^k_p(\RR^d)$ the classical Sobolev spaces defined in \cite[Chapter 3, Definition 3.1]{triebelharoske}.
For $\delta\ge 0$ and $p\in(0,\infty]$ let $L^p_\delta(\RR^d)=\{ v\in L^p(\RR^d )$ with $\int_{\RR^d } (1+|x|^{2})^\frac \delta2\, |u(x)|^p\, dx<\infty\}$.

For any index $p\in[1,\infty]$ we denote throughout the paper the conjugate element by $p'$. In particular, we have $\tfrac 1p + \tfrac 1 {p'}=1$.

\end{notation}
}

\section{Preliminaries}\label{deterministic}

In this section we shortly introduce some  preliminary results and state our main assumptions on the the operator $A$, memory kernel $b$.
The operator $A$ is typically an elliptic differential operator.
We  will assume the following.
\begin{ass}\label{ass1}
The operator $A:D(A)\to H$ is an unbounded, densely defined, linear, self adjoint, negative-definite operator with compact inverse.
\end{ass}
Throughout the paper we will use the fractional powers of the operator $-A$ and the spaces associated with them. Let $\alpha \in\RR$.
Then, for $\alpha <0$ we define $H^{A}_{\alpha }$ to be
the closure of $H$ under the norm $\n x\n_{H^{A}_{\alpha }} := \lk|
(-A)^{\alpha } x \rk|_H$. If $\alpha  \ge 0$, then
$H^{A}_{\alpha }=D((-A)^{\alpha })$ and $\n
x\n_{H^{A}_{\alpha }} := \lk| (-A)^{\alpha } x \rk|_H$. For more details, we refer to, for example
\cite[Chapter 2]{Pazy:83}.

%

\medskip

Next we formulate our assumptions on the memory term typical in the theory of viscoelasticity
so that the deterministic equation exhibits a parabolic behaviour, c.f. \cite[Assumption 1]{mishi1} and \cite[Hypothesis (b)]{CDaPP}.
The kernel $b$ is called $k$\textit{-monotone} ($k\ge 2$) if $b$ is $(k-2)$-time continuously differentiable on $(0,\infty)$, $(-1)^nb^{(n)}(t)\ge 0$ for $t>0$ and $0\le n\le k-2$ and $(-1)^{k-2}b^{(k-2)}$ is nonincreasing and convex, see \cite[Definition 3.4]{pruss}. In particular, throughout the paper we assume the following.
\begin{ass}\label{ass2}
The kernel $0\neq b\in L^1_{loc}(\mathbb{R}_+)$ is $4$-monotone and $$\lim_{t\to \infty}b(t)=0.$$ Furthermore,
\begin{equation}\label{eq:sector}
\rho  := 1 + \frac{2}{\pi}\sup \{ | \mathrm{arg} \, \widehat b(\lambda) |, \; \Re\lambda >0 \} \in (1,2).
\end{equation}
\end{ass}
It follows from \cite[Proposition 3.10]{pruss} that for 3-monotone and locally integrable kernels $b$, condition \eqref{eq:sector} is equivalent to
\begin{equation*}\label{eq:b-smooth}
\lim_{t\rightarrow 0} \frac{\frac 1t\int_0^t s b(s) \, ds}{\int_0^t -s \dot{b}(s) \, ds} < +\infty.
\end{equation*}
\begin{remark}\label{rem25}
A simple, but important, example of a kernel $b$ that satisfies Assumption \ref{ass2} is
$$
b(t) = \Gamma(\rho-1)^ {-1} t^{\rho-2} e^ {-\eta t},
$$
with $1<\rho<2$ and $\eta\ge 0$.

\end{remark}

Under Assumptions \ref{ass1} and \ref{ass2} on $A$ and $b$ it follows that there exists a strongly continuous family $(S(t))_{t\ge 0}$ of resolvents  such that the function
$u(t)=S(t)u_0$, $u_0\in H$, is the unique solution of the Cauchy problem
\DEQS
u(t) = A \int_0 ^t B(t-s) u(s)\, ds + u_0,\quad t\ge 0,
\EEQS
with $B(t)=\int_0 ^t b(s)\, ds$, see, \cite[Corollary 1.2]{pruss}.
Observe that the family of resolvents $(S(t))_{t\ge 0}$ does not have the semigroup property because of the presence of the memory term. However, the family of
resolvents $(S(t))_{t\ge 0}$ satisfy certain smoothing properties as shown in \cite[Lemma A.4]{baeumeretall}, see also, \cite[Proposition 2.5 and Remark 2.6]{mishi1}.
\begin{lemma}\label{smoothing}
Suppose the operator $A$ satisfies Assumption \ref{ass1} and the convolution kernel $b$ satisfy Assumptions \ref{ass2}. Then
\begin{itemize}
\item[(a)]
$\| S(t)\| _{L(H,H_\alpha  ^A)} \le C t ^ {-\alpha\rho}$, $ t>0$, $\alpha\in [0,\frac 1 \rho]$;
\item[(b)]
$\|  \dot S(t)\|_{L(H,H_\alpha  ^A)} \le C t ^ {-\alpha\rho-1}$, $ t>0$, $\alpha\in [0,\frac 1 \rho]$.
\end{itemize}
\end{lemma}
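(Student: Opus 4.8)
The plan is to diagonalise the problem with respect to $A$, reduce both bounds to uniform-in-frequency estimates for the associated scalar resolvents, and obtain those from the Laplace transform together with the sectoriality and regularity encoded in Assumption~\ref{ass2}.

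By Assumption~\ref{ass1} there is an orthonormal basis $(e_n)_{n\ge1}$ of $H$ and numbers $0<\mu_1\le\mu_2\le\cdots$, $\mu_n\to\infty$, with $Ae_n=-\mu_ne_n$. For $\mu>0$ let $s_\mu$ be the scalar resolvent of
\[
s_\mu(t)+\mu\int_0^tB(t-r)\,s_\mu(r)\,dr=1,\qquad t\ge0;
\]
under Assumptions~\ref{ass1}--\ref{ass2} this $s_\mu$ exists, is bounded on $[0,\infty)$, and is differentiable on $(0,\infty)$ (cf.\ \cite[Chapter~1]{pruss}). Projecting the identity defining $S$ onto $e_n$ and using that $A$ is self-adjoint shows $S(t)e_n=s_{\mu_n}(t)\,e_n$ for every $n$, hence, for $x\in H$ and $\alpha\ge0$,
\[
\|S(t)x\|_{H_\alpha^A}^2=\sum_{n\ge1}\mu_n^{2\alpha}\,|s_{\mu_n}(t)|^2\,|(x,e_n)_H|^2,\qquad
\|\dot S(t)x\|_{H_\alpha^A}^2=\sum_{n\ge1}\mu_n^{2\alpha}\,|\dot s_{\mu_n}(t)|^2\,|(x,e_n)_H|^2 .
\]
Therefore (a) and (b) reduce to the uniform scalar bounds
\[
\sup_{\mu>0}\mu^{\alpha}\,|s_\mu(t)|\le C\,t^{-\alpha\rho},\qquad \sup_{\mu>0}\mu^{\alpha}\,|\dot s_\mu(t)|\le C\,t^{-\alpha\rho-1},\qquad t>0,\ \ \alpha\in[0,\tfrac1\rho].
\]

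To prove these I would pass to Laplace transforms. Since $B=\int_0^{\cdot}b(s)\,ds$ one has $\widehat B(\lambda)=\widehat b(\lambda)/\lambda$, so the resolvent equation yields $\widehat{s_\mu}(\lambda)=\big(\lambda+\mu\,\widehat b(\lambda)\big)^{-1}$. By Assumption~\ref{ass2}, $b$ is $4$-monotone and \eqref{eq:sector} holds, so $\widehat b$ continues holomorphically to a sector $\{\lambda\neq0:|\arg\lambda|<\tfrac\pi2+\delta\}$ for some $\delta>0$ with $|\arg\widehat b(\lambda)|\le\tfrac\pi2(\rho-1)<\tfrac\pi2$ there, and the $4$-monotonicity also furnishes the regularity estimates $|\lambda\,\widehat{b}'(\lambda)|\le c\,|\widehat b(\lambda)|$ and its second-order analogue, with constants independent of $\mu$ (\cite[Chapter~1]{pruss}). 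Hence on that sector $\lambda$ and $\mu\,\widehat b(\lambda)$ both have argument bounded away from $\pm\pi$, so $\lambda+\mu\,\widehat b(\lambda)$ never vanishes and, uniformly in $\mu$,
\[
|\widehat{s_\mu}(\lambda)|\le\frac{C}{|\lambda|+\mu\,|\widehat b(\lambda)|},\qquad |\lambda\,\widehat{s_\mu}(\lambda)|\le\frac{C\,|\lambda|}{|\lambda|+\mu\,|\widehat b(\lambda)|}.
\]
I would then represent $s_\mu(t)=\tfrac1{2\pi i}\int_{\Gamma}e^{\lambda t}\widehat{s_\mu}(\lambda)\,d\lambda$ and $\dot s_\mu(t)=\tfrac1{2\pi i}\int_{\Gamma}\lambda e^{\lambda t}\widehat{s_\mu}(\lambda)\,d\lambda$ (the constant $s_\mu(0)=1$ appearing in the second transform integrates to zero for $t>0$), with $\Gamma=\Gamma(t)$ a sectorial contour whose circular part has radius $\asymp1/t$, and split $\Gamma$ at $|\lambda|=\mu^{1/\rho}$, where $|\lambda|$ and $\mu\,|\widehat b(\lambda)|$ balance — as is transparent for the model kernel $\widehat b(\lambda)=(\lambda+\eta)^{1-\rho}$ of Remark~\ref{rem25}, for which $s_\mu$ is of Mittag--Leffler type. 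The resulting scale-invariant estimates give, for all $t>0$,
\[
|s_\mu(t)|\le C\min\!\big(1,\mu^{-1}t^{-\rho}\big),\qquad |\dot s_\mu(t)|\le C\min\!\big(t^{-1},\mu^{-1}t^{-\rho-1}\big).
\]
Since $\mu^{\alpha}\min(1,\mu^{-1}t^{-\rho})\le t^{-\alpha\rho}$ and $\mu^{\alpha}\min(t^{-1},\mu^{-1}t^{-\rho-1})\le t^{-\alpha\rho-1}$ for every $\alpha\in[0,1]$ (so in particular on $[0,\tfrac1\rho]$, the extremal frequency being $\mu\asymp t^{-\rho}$), the required scalar bounds follow, and summing in $n$ yields (a) and (b).

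The step I expect to be the main obstacle is the uniform-in-$\mu$ sectorial control of $\widehat{s_\mu}$ and $\lambda\,\widehat{s_\mu}$: this is exactly where Assumption~\ref{ass2} is used — the angle condition \eqref{eq:sector} keeps $\lambda+\mu\,\widehat b(\lambda)$ off the negative real axis, and the full strength of $4$-monotonicity (equivalently, $2$-regularity of $B$) is what lets one pass to the derivative $\dot s_\mu$ while staying inside a sector, producing the sharp additional loss $t^{-1}$ in part (b). Alternatively, the explicit contour analysis can be bypassed entirely: under Assumptions~\ref{ass1}--\ref{ass2} the pair $(A,B)$ is parabolic in the sense of \cite{pruss}, and the asserted bounds are then precisely those of \cite[Lemma~A.4]{baeumeretall} (see also \cite[Proposition~2.5 and Remark~2.6]{mishi1}).
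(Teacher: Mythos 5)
Your argument is correct and coincides with the proof of the result the paper actually invokes: the paper gives no proof of Lemma \ref{smoothing}, deferring entirely to \cite[Lemma A.4]{baeumeretall} and \cite[Proposition 2.5 and Remark 2.6]{mishi1}, and those references establish the bounds exactly along your route --- diagonalisation of the self-adjoint $A$, reduction to the scalar resolvents $s_\mu$, and uniform-in-$\mu$ contour estimates on $\widehat{s_\mu}(\lambda)=(\lambda+\mu\,\widehat b(\lambda))^{-1}$ using the sector condition \eqref{eq:sector} and the $k$-monotonicity of $b$. The only point your sketch leaves implicit is the modulus lower bound $|\widehat b(\lambda)|\ge c\,|\lambda|^{1-\rho}$ on the relevant contour (needed to place the balance point at $|\lambda|\asymp\mu^{1/\rho}$ and to pass from the model kernel of Remark \ref{rem25} to general kernels in the bound $|s_\mu(t)|\le C\min(1,\mu^{-1}t^{-\rho})$), which in the cited references is extracted from the $4$-monotonicity together with the definition of $\rho$; you correctly identify this uniform sectorial control as the crux, so the two approaches are essentially identical.
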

%


In the remainder of the section we will investigate deterministic and stochastic convolutions.
In order to proceed we first introduce some notation.
Let $E$ be a Banach space and let $T>0$ be fixed. 
 For $1\le q<\infty$, and all $\lambda\ge 0$
 let $L^ q_\lambda ( 0,T ;E ) $ denote the
space of all integrable functions $\xi:I\to E$   such that
$$
\| \xi\|_{L^q_\lambda (0,T;E)}^q :=\int_0 ^T   e^ {-\lambda t}  \lk| \xi(t)\rk|^ q_{E}\, \,
dt<\infty.
$$

\subsection{Deterministic  convolutions}

\noindent
Let us denote the convolution of an appropriate function $\xi:[0,T]\to H$ and family of resolvents $(S(t))_{t\ge 0}$ by
\DEQSZ\label{det-con}
(\mC \xi)  (t) := \int_0^t \CT(t-r) \xi (r)\, dr,\quad t\in[0,T].
\EEQSZ
We have the following result.
\begin{proposition}\label{det_s}
Under the Assumptions \ref{ass1} and \ref{ass2},
for any $
0<\alpha<\frac 1\rho$, the operator $$\mC:L^q_\lambda ( 0,T ;H_{-\alpha} ^A)\to L^q_\lambda ( 0,T;H) $$
 is a bounded linear operator.
In particular, there exists a constant $C=C(\alpha,\lambda)>0$, with $$\lim_{ \lambda\to\infty} C(\lambda,\alpha)=0,$$ such that
for any $\xi\in L^ {q}_\lambda ( 0,T;H_{-\alpha} ^A)$ we have
$$
\| \mC \xi\|_{L^q_\lambda (0,T;H)} \le C\,
\|  \xi\|_{L^q_\lambda ( 0,T ;H_{-\alpha} ^A)}.
$$
\end{proposition}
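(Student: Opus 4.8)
The plan is to estimate $\|\mC\xi\|_{L^q_\lambda(0,T;H)}$ directly by applying the smoothing bound of Lemma \ref{smoothing}(a). First I would fix $0<\alpha<\frac1\rho$ and note that by definition of the norm on $H^A_{-\alpha}$, the operator $S(t-r)$ maps $H^A_{-\alpha}$ into $H$ with $\|S(t-r)\|_{L(H^A_{-\alpha},H)} = \|(-A)^\alpha S(t-r)\|_{L(H,H)} = \|S(t-r)\|_{L(H,H^A_\alpha)} \le C(t-r)^{-\alpha\rho}$, the crucial point being that since $\alpha\rho<1$ the kernel $r\mapsto (t-r)^{-\alpha\rho}$ is integrable on $[0,t]$. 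Hence pointwise
$$
|(\mC\xi)(t)|_H \le C\int_0^t (t-r)^{-\alpha\rho}\,|\xi(r)|_{H^A_{-\alpha}}\,dr,
$$
so $\mC$ is dominated by convolution with the $L^1_{loc}$ kernel $k(s):=Cs^{-\alpha\rho}\mathbf{1}_{(0,T)}(s)$.

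Next I would pass to the weighted spaces. Writing $g(t):=e^{-\lambda t/q}|\xi(t)|_{H^A_{-\alpha}}$, so that $\|g\|_{L^q(0,T)}=\|\xi\|_{L^q_\lambda(0,T;H^A_{-\alpha})}$, the bound above gives
$$
e^{-\lambda t/q}|(\mC\xi)(t)|_H \le C\int_0^t e^{-\lambda(t-r)/q}(t-r)^{-\alpha\rho}\,g(r)\,dr = \bigl(k_\lambda * g\bigr)(t),
$$
with $k_\lambda(s):=Ce^{-\lambda s/q}s^{-\alpha\rho}\mathbf{1}_{(0,\infty)}(s)$. By Young's convolution inequality on the half-line, $\|k_\lambda*g\|_{L^q(0,\infty)}\le \|k_\lambda\|_{L^1(0,\infty)}\|g\|_{L^q(0,\infty)}$, and therefore
$$
\|\mC\xi\|_{L^q_\lambda(0,T;H)} \le \|k_\lambda\|_{L^1(0,\infty)}\,\|\xi\|_{L^q_\lambda(0,T;H^A_{-\alpha})},
\qquad
\|k_\lambda\|_{L^1(0,\infty)} = C\int_0^\infty e^{-\lambda s/q}s^{-\alpha\rho}\,ds = C\,\Gamma(1-\alpha\rho)\,(q/\lambda)^{1-\alpha\rho}.
$$
This identifies the constant $C(\alpha,\lambda)$ and makes the decay $C(\alpha,\lambda)\to 0$ as $\lambda\to\infty$ transparent, since $1-\alpha\rho>0$. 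Linearity of $\mC$ is immediate from the formula \eqref{det-con}, and boundedness is exactly the displayed inequality.

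The only genuine obstacle is the integrability of the singular kernel at $s=0$, which is precisely guaranteed by the hypothesis $\alpha<\frac1\rho$ (equivalently $\alpha\rho<1$); this is where Lemma \ref{smoothing}(a) and Assumption \ref{ass2} (which yields $\rho\in(1,2)$ and hence a nonempty admissible range of $\alpha$) enter in an essential way. A minor technical point is to make sure the pointwise domination and the use of Fubini/Young are justified for $\xi\in L^q_\lambda(0,T;H^A_{-\alpha})$; this follows by first establishing the estimate for, say, simple or continuous $\xi$ and then extending by density, or simply by invoking the scalar Young inequality which already handles general $g\in L^q$. I would also remark that $q=1$ is covered by the same argument with no change, so the statement holds for all $1\le q<\infty$ as claimed.
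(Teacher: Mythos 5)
Your argument is correct and is essentially the proof the paper has in mind: the paper only remarks that the result follows from the Minkowski inequality (your pointwise bound $|(\mC\xi)(t)|_H\le\int_0^t\|S(t-r)\|_{L(H^A_{-\alpha},H)}|\xi(r)|_{H^A_{-\alpha}}\,dr$) together with Young's inequality for convolutions, referring to the analogous computation in the cited reference. Your explicit identification of the constant as $C\,\Gamma(1-\alpha\rho)(q/\lambda)^{1-\alpha\rho}$ matches the quantitative form the authors use later (e.g.\ in \eqref{operatorCT}), so nothing is missing.
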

\begin{proof}
{\rm Proposition \ref{det_s} can be shown by straightforward calculations using the Minkowski inequality and the Young inequality
for convolutions. Compare with \cite[Proof of Theorem 3.3]{baeumeretall}.
}
\end{proof}
%

%
%
%

\subsection{Stochastic  convolutions}

In the next paragraph we would like to show a similar result for stochastic convolutions. First,
we introduce the setting we will
use throughout the whole paper. Let
$\mathfrak{A}=(\Omega,\CF,\BF,\PP)$ be a complete filtered probability space with right continuous filtration
 $\{\CF_t\}_{t\ge
0}$, denoted by  by $\BF$.
The random perturbation we are considering will be specified via a compensated Poisson random measure.
%
%
%
%
%
%
\begin{definition}\label{def-Prm}
Let $Z$  be a separable Banach space and $\nu$ be a $\sigma$-finite measure on $(Z,\mathcal{B}(Z))$. \del{ and let $(\Omega,\CF,\BF,\PP)$ be a 
probability space with filtration $\BF=\{\CF_t\}_{t\ge 0}$.} A (time-homogeneous)
{\sl Poisson random measure} $\eta$ on $(Z\times \RR_+,\mathcal{B}(Z)\otimes \mathcal{B}({\mathbb{R}_+}))$   over
$(\Omega,\CF,\BF,\PP)$ (Poisson random measure on $Z$ for short) with compensator $\gamma:=\nu\otimes \lambda$ is a family $\eta:=\{\eta(\omega,\cdot):\omega \in \Omega\}$ of nonnegative measures $\eta(\omega,\cdot)$
on $(Z\times \RR_+,\mathcal{B}(Z)\otimes \mathcal{B}({\mathbb{R}_+}))$ such that
{\begin{trivlist}
 \item[(i)]the mapping $\omega\to\eta(\omega,\cdot)$ is
 measurable $(\Omega,\CF)\to (M_I(Z\times \RR_+),\CM_I(Z\times \RR_+))$, where $M_I(Z\times \RR_+)$ is the set of nonnegative measures on $(Z\times \RR_+,\mathcal{B}(Z)\otimes
 \mathcal{B}({\mathbb{R}_+}))$ endowed with the $\sigma$--algebra $\CM_I(Z\times \RR_+)$ generated by all mappings $$M_I(Z\times \RR_+)\ni \rho\to \rho(\Gamma),\quad \Gamma \in
 \mathcal{B}(Z)\otimes \mathcal{B}({\mathbb{R}_+});$$
\item[(ii)] for each $B\in  \mathcal{B}(Z) \otimes
\mathcal{B}({\mathbb{R}_+}) $,
 $\eta(B):=\eta(\cdot,B): \Omega\to \bar{\mathbb{N}} $ is a Poisson random variable with parameter $\gamma(B)$;
\item[(iii)] $\eta$ is independently scattered, i.e. if the sets $
B_j \in   \CZ\otimes \mathcal{B}({\mathbb{R}_+})$, $j=1,\cdots,
n$, are  disjoint,   then the random variables $\eta(B_j)$,
$j=1,\cdots,n $, are independent.
\end{trivlist}
}
The  difference between a time homogeneous  Poisson random measure
$\eta$  and its compensator $\gamma$, i.e.  $\tilde
\eta=\eta-\gamma$, is called a  {\em compensated Poisson random
measure}. The measure $\nu$ is called {\sl intensity measure} of $\eta$.
\end{definition}

%
%
\del{\begin{ass}\label{asslevy}
Fix $p\in(1,2]$ and $Z$ be a Banach space with Levy measure $\nu$. Throughout the paper we assume that the L\'evy measure $\nu$ will satisfy
$$
\int_{Z}1\wedge |z| ^p\,\nu(dz)<\infty.
$$
\end{ass}
}
Since it would
exceed the scope of the paper, we do not tackle here stochastic integration with respect to Poisson random measures.
A short summary of stochastic integration
is given
e.g.\  or Bre{\'z}niak and Hausenblas \cite{maxreg} or in \cite{PESZAT+ZABZCYK},
where
the stochastic integral is defined for all progressive measurable processes, i.e.\ for all processes which can be approximated by simple predictable c\'agl\'ag processes.
The following inequality corresponds to the It\^o isometry, compare e.g.\ \cite{dirksen}.
Assume that $E$ is a Hilbert space.
Then, for any $p\in [1,2]$ 
and $q=p^n$ for some $n\in\NN$,
 there exists a constant $C>0$ such that
for all   progressively measurable processes
$$\xi:\Omega\times[0,T] \to L^p(Z,\nu ;E)$$
we have (see e.g.\ \cite{dirksen,levy2})
\DEQSZ\label{DD}
\\
\nonumber
\lqq{ \EE \lk| \int_0^ t \int _Z \xi(s,z)\,\tilde \eta(dz,ds)\rk|_E^ q } &&\\ &\le& C\lk\{ \EE \int_0^ T \int _Z  \lk|\xi(s,z)\rk|_E ^ q \nu(dz)\, ds
+\EE \lk( \int_0^ T \int _Z  \lk|\xi(s,z)\rk| _E^ p \nu(dz)\, ds\rk) ^ \frac qp
\rk\}.
\nonumber
\EEQSZ
By interpolation techniques one can prove inequality \eqref{DD} for all $q$ with $p\le q<\infty$.
We finish with the following version of the Stochastic Fubini Theorem (see \cite{zhu}).


\begin{theorem}\label{stochasticfubini}
Assume that $E$ is a Hilbert space  and
$$\xi:\Omega\times[a,b]\times \Rb{+}\times Z \to  E$$
is a progressively measurable process. Then, for each $T\in [a,b]$, we have a.s.
 \DEQS  \int_a^b\big[ \int_0^T \int_Z
\xi(s,r,z)\,\tilde\eta( dz,dr)\big] \,ds &=& \int_0 ^T \int_Z \big[
\int_a^b \xi(s,r,z)\,ds \big]\, \tilde \eta( dz,dr) . \EEQS
\end{theorem}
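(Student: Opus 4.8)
The plan is to prove the stochastic Fubini identity
\[
\int_a^b \Bigl[ \int_0^T \int_Z \xi(s,r,z)\, \tilde\eta(dz,dr)\Bigr]\, ds = \int_0^T \int_Z \Bigl[ \int_a^b \xi(s,r,z)\, ds\Bigr]\, \tilde\eta(dz,dr)
\]
by the standard measure-theoretic recipe: first establish it for a dense class of elementary integrands, and then pass to the limit on both sides. Concretely, I would first verify the identity for integrands of the product form $\xi(s,r,z) = \mathbf{1}_{(c,d]}(s)\, \mathbf{1}_{(u,v]}(r)\, \mathbf{1}_A(z)\, x$ with $x\in E$, $A\in\mathcal{B}(Z)$ with $\nu(A)<\infty$, and more generally for simple predictable processes in the $(r,z,\omega)$ variables that depend in a simple (piecewise constant, Borel) way on $s$. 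For such integrands both sides reduce to finite sums and the identity is a trivial interchange of a finite sum with the stochastic integral, combined with the ordinary (deterministic) Fubini theorem in the $s$-variable; linearity of the stochastic integral against $\tilde\eta$ does the rest.

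The second and main step is the approximation argument. Given a general progressively measurable $\xi$ with the integrability needed for all the integrals in the statement to make sense (i.e.\ $\xi\in L^2$ of the relevant product measure, or whatever square/$p$-integrability is implicit so that the isometry-type inequality \eqref{DD} applies), I would choose a sequence $\xi_n$ of simple integrands of the above type with $\xi_n\to\xi$ in $L^2(\Omega\times[a,b]; L^2([0,T]\times Z;E))$ (equivalently in the norm governing \eqref{DD}, uniformly in $s$ after integrating, using Minkowski's integral inequality to move the $ds$-integration through). For the right-hand side, I apply the It\^o-isometry inequality \eqref{DD} to $\int_a^b(\xi_n-\xi)\,ds$, bounding its $L^q$-norm (with $q=2$, so $p=q=2$ and the two terms on the right of \eqref{DD} coincide) by $\int_a^b\|\xi_n(s,\cdot)-\xi(s,\cdot)\|\,ds$ via Minkowski, which tends to $0$. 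For the left-hand side I use \eqref{DD} for each fixed $s$ to see $\int_0^T\int_Z\xi_n(s,r,z)\,\tilde\eta(dz,dr)\to\int_0^T\int_Z\xi(s,r,z)\,\tilde\eta(dz,dr)$ in $L^2(\Omega)$ for a.e.\ $s$, with a dominating function, and then integrate in $s$ and invoke dominated convergence (or again Minkowski) to get convergence of the outer $ds$-integral in $L^2(\Omega)$. Since both sides of the identity for $\xi_n$ converge in $L^2(\Omega)$ to the two sides of the identity for $\xi$, and they are equal for every $n$, the limits agree almost surely (after passing to a subsequence to upgrade to a.s.\ equality if desired).

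The main obstacle is the measurability/regularity bookkeeping: one must check that $s\mapsto \int_0^T\int_Z\xi(s,r,z)\,\tilde\eta(dz,dr)$ admits a jointly measurable version in $(s,\omega)$ so that its $ds$-integral on the left-hand side is well defined, and that $(r,z,\omega)\mapsto\int_a^b\xi(s,r,z)\,ds$ is again progressively measurable so the right-hand side makes sense; both follow from progressive measurability of $\xi$ together with a monotone-class argument applied in parallel with the approximation above (the simple integrands manifestly have the required joint measurability, and it is preserved under the $L^2$-limits). A secondary, purely technical point is verifying the uniform (in $s$) integrability bound that legitimizes pulling the $ds$-integral inside the norm in \eqref{DD}; this is exactly where Minkowski's integral inequality is used, and it is the reason a square-integrability (or $p$-integrability) hypothesis on $\xi$ over the full product space is needed rather than just for a.e.\ $s$. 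I would remark that this is standard and refer to \cite{zhu} for the details, since, as the authors note, a self-contained treatment of stochastic integration against Poisson random measures is beyond the scope of the paper.
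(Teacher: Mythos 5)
The paper does not prove this theorem at all: it is stated with a pointer to \cite{zhu}, so there is no in-paper argument to compare yours against. Your sketch --- verify the identity for simple product-form integrands, then pass to the limit on both sides using the isometry-type inequality \eqref{DD} with $p=q=2$ together with Minkowski's integral inequality, while tracking joint measurability of the parametrized stochastic integral --- is the standard and correct route, and is essentially what the cited reference carries out; the only cosmetic point is that no subsequence extraction is needed at the end, since both sides are $L^2(\Omega)$-limits of the same sequence of random variables and hence coincide almost surely outright.
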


\medskip

We continue by introducing some notation.
As before, fix $1\le q<\infty$,  $0<m<\infty$, and a Banach space $E$. Then,
 $\CM^{m,q}_\lambda  ( 0,T ;E  )$ denotes the
space of all progressively measure processes $\xi:\Omega\times [0,T]\to E$  such that
$$
\|  \xi\|^m_{\CM^{m,q}_\lambda (0,T;E)}=\EE\lk( \int_0 ^T   e^ {-\lambda t}  \lk| \xi(t)\rk|^ q_{E}\, \,
dt\rk)^\frac mq<\infty.
$$
With this notation Proposition \ref{det_s} immediately implies the following result.
\begin{corollary}\label{det_st}
Under the Assumptions \ref{ass1} and \ref{ass2},
for any $0<\alpha<\frac 1\rho$, the operator $$\mC:\CM^{m,q}_\lambda (0,T;H_{-\alpha} ^A)\to \CM^{m,q}_\lambda (0,T;H) $$
 is a bounded linear operator.
In particular, there exists a constant $C=C(\alpha,\lambda)>0$, with $$\lim_{ \lambda\to\infty} C(\lambda,\alpha)=0,$$ such that
for any $\xi\in \CM^ {m,q}_\lambda (0,T;H_{-\alpha} ^A)$ we have
$$
\| \mC \xi\|_{\CM^{m,q}_\lambda (0,T;H)} \le C\, 
\|  \xi\|_{\CM^{m,q}_\lambda (0,T;H_{-\alpha} ^A)}.
$$
\end{corollary}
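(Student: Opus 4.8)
The plan is to reduce the statement to the pathwise estimate of Proposition \ref{det_s}. First I would observe that if $\xi\in\CM^{m,q}_\lambda(0,T;H_{-\alpha}^A)$, then by Tonelli's theorem the path $t\mapsto\xi(\omega,t)$ belongs to $L^q_\lambda(0,T;H_{-\alpha}^A)$ for $\PP$-a.e.\ $\omega$: the quantity $\int_0^T e^{-\lambda t}|\xi(\omega,t)|^q_{H_{-\alpha}^A}\,dt$ is a nonnegative measurable function of $\omega$ whose $\tfrac mq$-th moment is $\|\xi\|^m_{\CM^{m,q}_\lambda}<\infty$, hence it is finite $\PP$-a.s. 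On this full-measure set Proposition \ref{det_s} applies and yields, with the \emph{deterministic} constant $C=C(\alpha,\lambda)$ of that proposition (which does not depend on $\omega$),
\[
\|(\mC\xi)(\omega,\cdot)\|_{L^q_\lambda(0,T;H)}\le C\,\|\xi(\omega,\cdot)\|_{L^q_\lambda(0,T;H_{-\alpha}^A)}\qquad\text{for }\PP\text{-a.e.\ }\omega .
\]
Linearity of $\mC$ in $\xi$ is immediate from the definition \eqref{det-con}.

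Next I would raise both sides to the power $m$ and take expectations; since $C$ is deterministic this gives
\[
\EE\Bigl(\int_0^T e^{-\lambda t}|(\mC\xi)(t)|^q_H\,dt\Bigr)^{m/q}\le C^m\,\EE\Bigl(\int_0^T e^{-\lambda t}|\xi(t)|^q_{H_{-\alpha}^A}\,dt\Bigr)^{m/q},
\]
i.e.\ $\|\mC\xi\|_{\CM^{m,q}_\lambda(0,T;H)}\le C\,\|\xi\|_{\CM^{m,q}_\lambda(0,T;H_{-\alpha}^A)}$. Because the constant is literally the one from Proposition \ref{det_s}, the property $\lim_{\lambda\to\infty}C(\lambda,\alpha)=0$ is inherited for free.

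The only genuine point requiring an argument — and the step I expect to be the main, albeit minor, obstacle — is verifying that $\mC\xi$ is again a progressively measurable process, so that it actually lies in $\CM^{m,q}_\lambda(0,T;H)$ rather than merely having finite norm. For this I would argue that for each fixed $t$ the Bochner integral $\int_0^t\CT(t-r)\xi(r)\,dr$ is $\CF_t$-measurable, being an $L^q$-limit of Riemann-type sums $\sum_i\CT(t-r_i)\xi(r_i)\,\Delta r_i$ with $r_i\le t$, each $\CF_t$-measurable since $\xi$ is adapted, while joint measurability in $(t,\omega)$ follows from the strong continuity of $(\CT(t))_{t\ge0}$ together with the joint measurability of $\xi$; restricting to $[0,t]$ yields progressive measurability. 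Alternatively, one first checks the claim for simple predictable $\xi$, where $\mC\xi$ is manifestly progressively measurable, and then passes to the limit using the bound just established. With progressive measurability in hand, the corollary follows.
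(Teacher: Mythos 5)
Your argument is correct and is exactly how the paper obtains this result: the paper gives no separate proof, stating only that Corollary \ref{det_st} ``immediately'' follows from Proposition \ref{det_s}, which is precisely your pathwise application of the deterministic estimate followed by raising to the power $m$ and taking expectations (the constant being deterministic, the decay in $\lambda$ is inherited). Your additional care about progressive measurability of $\mC\xi$ is a reasonable extra check that the paper leaves implicit.
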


Next, we  investigate the properties of the
stochastic convolution operator defined, for an appropriate process $\xi:\Omega\times [0,T]\times Z\to E$, by  the formula,
\DEQSZ
\label{eqn-stoch_ter} \FG (\xi) &=&\left\{
\Rb{+}\ni t\mapsto \int_0 ^t\int_Z \CT(t-s) \xi(s,z)\;
\tilde\eta(dz,ds)\right\}.
\EEQSZ
For any $q,p\ge 1$ and $m>0$ let $\CM^{m,q} _\lambda (  0,T;L ^p(Z;\nu,E))  $ be the
space of all progressively measure processes $\xi:\Omega\times [0,T]\to  L^p(Z,\nu ;E)$ such that
$$
\lk\|  \xi  \rk\|^m_{\CM^{m,q}_\lambda (0,T;L^ {p}(Z,\nu;E)) }:=\EE\lk( \int_0 ^T   e^{-\lambda t}  \lk(  \int_Z \lk| \xi(t,z) \rk|^ p_{E} \,\nu(dz) \,\rk) ^{\frac qp } \,
dt\rk) ^{\frac mq }<\infty.
$$

\begin{proposition}\label{stconv}
Let $\eta$ be a Poisson random measure with intensity measure $\nu$.
Under the Assumptions \ref{ass1} and \ref{ass2},
for any  $\frac 1 {q\rho}>\alpha\ge 0$ and $\frac 1{\rho p}>\alpha_1\ge 0$
the stochastic convolution
$$\mathfrak{S}: \CM^ {q,q}_\lambda(0,T;L^ {q}(Z,\nu;H^A _{-\alpha}) ) \cap \CM ^{q,q}_{\lambda}(0,T;L^ {p}(Z,\nu;H^A _{-\alpha_1}) )
\longrightarrow \CM_\lambda^ {q,q}(0,T; H   )
$$
is linear and bounded.
In particular, there exists a constant $C=C(\alpha,\lambda)>0$, with $$\lim_{ \lambda\to\infty} C(\lambda,\alpha)=0,$$ such that
for any $$\xi \in \CM^ {q,q}_\lambda (0,T;L ^q(Z,\nu;H_{-\alpha} ^A))\cap  \CM^ {q,q}_\lambda (0,T;L ^p(Z,\nu;H_{-\alpha_1} ^A))$$ we have
\DEQS
\lqq{
\lk| \mS \xi\rk|^q_{\CM^{q,q}_\lambda (0,T;H)}}
&&
\\
& \le& C\,\lk({1 \over \lambda ^{1-q\rho}}\lk|  \xi  \rk|_{\CM^{q,q}_\lambda (0,T;L^ {q}(Z,\nu;H^A _{-\alpha})) }+
{1\over \lambda ^{1-p\rho} }\lk|  \xi  \rk|_{\CM^{q,q}_\lambda (0,T;L^ {p}(Z,\nu;H^A _{-\alpha_1})) }\rk)
 .
\EEQS
\end{proposition}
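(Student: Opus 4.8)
The plan is to estimate the $q$-th moment of $\mS\xi$ directly, using the smoothing property of the resolvent family from Lemma~\ref{smoothing}(a) together with the It\^o-type maximal inequality \eqref{DD}. First I would write, for fixed $t$,
\DEQS
\lk|(\mS\xi)(t)\rk|_H^q &=& \lk| \int_0^t\int_Z S(t-s)\xi(s,z)\,\tilde\eta(dz,ds)\rk|_H^q,
\EEQS
and apply \eqref{DD} (valid for all $q\ge p$ by the interpolation remark following it) with the integrand $(s,z)\mapsto S(t-s)\xi(s,z)$. This produces two terms: a ``diagonal'' term $\EE\int_0^t\int_Z |S(t-s)\xi(s,z)|_H^q\,\nu(dz)\,ds$ and a ``$p$-variation'' term $\EE\lk(\int_0^t\int_Z |S(t-s)\xi(s,z)|_H^p\,\nu(dz)\,ds\rk)^{q/p}$. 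For the first term I bound $|S(t-s)\xi(s,z)|_H \le \|S(t-s)\|_{L(H^A_{-\alpha},H)}\,|\xi(s,z)|_{H^A_{-\alpha}} \le C(t-s)^{-\alpha\rho}|\xi(s,z)|_{H^A_{-\alpha}}$ by Lemma~\ref{smoothing}(a) (noting $\alpha<\tfrac1{q\rho}\le\tfrac1\rho$), and for the second term similarly with $\alpha_1$ and exponent $q/p$, getting $C(t-s)^{-\alpha_1\rho}$ inside the $p$-th power and hence $(t-s)^{-\alpha_1\rho p}$ inside the integral raised to $q/p$.

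The key point is then to integrate in $t$ against $e^{-\lambda t}$ and recognize the resulting expressions as weighted convolutions to which Young's inequality applies, exactly as in the deterministic Proposition~\ref{det_s}. Concretely, for the diagonal term, after multiplying by $e^{-\lambda t}$ and integrating over $[0,T]$, Fubini and the substitution $r=t-s$ give a convolution of the kernel $r\mapsto e^{-\lambda r}r^{-\alpha\rho q}$ with the function $s\mapsto e^{-\lambda s}\int_Z|\xi(s,z)|_{H^A_{-\alpha}}^q\,\nu(dz)$; since $\alpha q\rho<1$ the kernel lies in $L^1(0,T)$ with $L^1$-norm of order $\lambda^{-(1-\alpha q\rho)}$ (up to a constant that $\to 0$ as $\lambda\to\infty$), which yields the first summand $\lambda^{-(1-q\rho)}\|\xi\|_{\CM^{q,q}_\lambda(0,T;L^q(Z,\nu;H^A_{-\alpha}))}$ claimed (here $\alpha\ge 0$ is absorbed, the worst exponent being $\alpha=0$, giving the stated $\lambda^{1-q\rho}$). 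The $p$-variation term is handled the same way: write $\lk(\int_0^t g(t,s)\,ds\rk)^{q/p}$ where $g(t,s)=(t-s)^{-\alpha_1\rho p}\int_Z|\xi(s,z)|^p_{H^A_{-\alpha_1}}\nu(dz)$, use the weighted Young/Minkowski inequality in the form appropriate for the $L^{q/p}$ norm (as in \cite[Proof of Theorem 3.3]{baeumeretall}), and collect the factor $\lambda^{-(1-p\rho)}$ from the $L^1$-norm of the kernel $r\mapsto e^{-\lambda r}r^{-\alpha_1\rho p}$ raised to the power $q/p$. Summing the two contributions and taking $\EE$ through gives the asserted bound.

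The main obstacle I anticipate is the careful bookkeeping of exponents and the correct weighted convolution inequality for the second ($p$-variation) term, where one does not have a plain $L^q$ norm but an $L^{q/p}$-type expression, so one must combine the $e^{-\lambda t}$ weight with the singular kernel and the power $q/p$ in a way that still produces a finite constant decaying in $\lambda$; this is where the condition $\alpha_1<\tfrac1{\rho p}$ (ensuring $\alpha_1\rho p<1$, i.e.\ integrability of the kernel) is essential. Everything else is a routine adaptation of the deterministic argument in Proposition~\ref{det_s} and Corollary~\ref{det_st}, now with the stochastic integral replaced by its moment bound \eqref{DD}; I would organize the write-up by first isolating a scalar convolution lemma (weighted Young inequality with the two exponents $q$ and $q/p$) and then applying it twice.
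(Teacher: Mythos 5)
Your proposal is correct and follows essentially the same route as the paper's proof: apply the moment inequality \eqref{DD} to the integrand $S(t-s)\xi(s,z)$, invoke Lemma \ref{smoothing}(a) to produce the singular kernels $(t-s)^{-q\rho\alpha}$ and $(t-s)^{-p\rho\alpha_1}$, and then distribute the weight $e^{-\lambda t}$ so that Fubini and Young's convolution inequality yield constants decaying in $\lambda$ (the conditions $\alpha<\tfrac1{q\rho}$, $\alpha_1<\tfrac1{p\rho}$ guaranteeing integrability of the kernels exactly as you identify). No gaps; the only cosmetic difference is that the paper carries the exponents $\lambda^{1-q\rho\alpha}$ and $(\lambda^{1-p\rho\alpha_1})^{q/p}$ explicitly rather than absorbing them as you do.
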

\begin{proof}
{\rm %
Applying inequality \eqref{DD} we obtain
\DEQS
\lqq{ \lk| \mS \xi\rk|^{q}_{\CM^{q,q}_\lambda (0,T;H)}
=
 \EE \int_0^T e^ {-\lambda t}
\lk| \mS(\xi)(t)\rk| ^q_{H } \, dt }
&&\\
&\le & C\,  \EE \int_0^T e^ {-\lambda t}  \int_0^ t \int_{Z}  \lk|\CT(t-s) \xi(s,z)\rk| ^q_{H}\nu(dz)\, ds \,
dt
\\&&{}+C\,\EE \int_0^T     e^ {-\lambda t} \lk(  \int_0^ t    \int_{Z}  \lk|\CT(t-s)
\xi(s,z)\rk| ^{p }_{ H}\nu(dz) \, ds\rk) ^{q\over p }  \,
dt. \EEQS
Using the estimates in Lemma \ref{smoothing}
we get 
\DEQS
\lqq{ \lk| \mS \xi\rk|^{q}_{\CM^{q,q}_\lambda (0,T;H)}} &&
\\
 &\le &\EE \int_0^T  e^ {-\lambda t}   \int_0^ t\int_{Z} (t-s) ^ {-q\rho\alpha } \, \lk|
\xi(s,z)\rk| ^q_{H_{-\alpha}^A }\, \nu(dz)\,ds \, dt
\\&&{}+C\EE\, \int_0^ T e^ {-\lambda t}    \lk(  \int_0^ t   \int_{Z} (t-s) ^{-{p }\rho\alpha _1 }  \lk| \xi(s,z)\rk| ^{p}_{H_{-\alpha_1} ^A  }\nu(dz)\, ds\rk)
^{q\over p}  \, dt\\
 &= & C\EE\int_0^T     \int_0^ t\int_{Z} (t-s) ^ {-q\rho\alpha }e^  {-\lambda (t-s)} \, \lk|
\xi(s,z)\rk| ^q_{H_{-\alpha}^A }e^ {-\lambda s }\nu(dz)\, ds \, dt
\\
\lqq{ {}+C\EE\, \int_0^ T     \lk(  \int_0^ t   \int_{Z} (t-s) ^{-{p }\rho\alpha _1} e^  {-\frac pq \lambda (t-s)} \lk| \xi(s,z)\rk| ^{p }_{H_{-{\alpha_1}} ^A  }e^ {-\frac pq \lambda s
}\nu(dz)\, ds\rk)
^{\frac qp }  \, dt.
} &&%
\EEQS
Finally,
it follows by Fubini's Theorem and
the Young inequality for convolutions, that
 \DEQS
\lqq{ \lk| \mS \xi\rk|^{q}_{\CM^{q,q}_\lambda (0,T;H)}} &&
\\
&\le &
{C_1 \over \lambda ^{1-q\rho\alpha }}
\, \EE \int_0^T    \int_Z  e ^ {-\lambda t}  \lk|
\xi(t,z)\rk|_{H_{-\alpha} ^A } ^q \nu(dz)\,  dt
\\
&&{}+
{C_2\over (\lambda ^{1-p\rho\alpha _I}) ^\frac qp }
\,\EE \lk(  \int_0^T    e ^ {- \lambda t} \lk( \int_Z  \lk|
\xi(t,z)\rk|_{H^A_{-\alpha_1}  } ^{p } \nu(dz)\rk) ^ \frac qp  dt\rk),
\EEQS
which gives the assertion.
	}\end{proof}

\begin{proposition}\label{prop:cadlag}
Let $E$ be a Hilbert space and
$\xi:\Omega\times[0,T] \to L^p(Z,\nu ;E)$ be a progressively measurable processes. If $b$ satisfies Assumption \ref{ass2}, then the process
 $
 t \mapsto   \FG (\xi)(t)
 $
 has a \cadlag modification in $E$.
\end{proposition}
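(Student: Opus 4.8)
The plan is to show that the stochastic convolution process $t\mapsto \FG(\xi)(t)$ admits a \cadlag modification by exhibiting it as a limit of manifestly \cadlag processes, with the convergence strong enough to preserve the \cadlag property. The natural first step is to exploit the resolvent equation for $(S(t))_{t\ge 0}$: since $u(t)=S(t)u_0$ solves $u(t)=A\int_0^t B(t-s)u(s)\,ds+u_0$, one has the identity $S(t)x=x+A\int_0^t B(t-s)S(s)x\,ds$, or equivalently $\dot S(t)x = A\,\frac{d}{dt}\!\int_0^t B(t-s)S(s)x\,ds$. Using this, I would rewrite the stochastic convolution, via the Stochastic Fubini Theorem (Theorem \ref{stochasticfubini}), so as to move one derivative off the singular kernel: schematically,
\[
\FG(\xi)(t)=\int_0^t\!\!\int_Z \xi(s,z)\,\tilde\eta(dz,ds)+\int_0^t \dot S(t-r)\Big(\int_0^r\!\!\int_Z \xi(s,z)\,\tilde\eta(dz,ds)\Big)dr .
\]
The first term on the right is a martingale-type stochastic integral with respect to $\tilde\eta$ and is \cadlag by construction of the integral (it is approximated by simple predictable c\'agl\'ad integrands, whose integrals are pure-jump \cadlag). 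The second term is a genuine Lebesgue convolution (in $r$) of the \cadlag, hence locally bounded on $[0,T]$, process $M(r):=\int_0^r\!\int_Z \xi\,\tilde\eta$ against the kernel $\dot S(\cdot)$; by Lemma \ref{smoothing}(b) the kernel $\dot S(t-r)$ acting into $H$ (take $\alpha=0$) is integrable near $r=t$, so dominated convergence gives continuity in $t$ of this term. Adding the two pieces yields a \cadlag modification.

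The main technical point — and the step I expect to be the chief obstacle — is justifying the integration-by-parts/Fubini manipulation at the required level of regularity. The kernel $\dot S(t-r)$ into $H$ is only $O((t-r)^{-1})$ (Lemma \ref{smoothing}(b) with $\alpha=0$), which is \emph{not} integrable, so one cannot naively split off a full derivative. The remedy is to land in a slightly smoother space first: by Proposition \ref{stconv}, $\xi$ in the stated intersection space makes $\FG(\xi)$ take values in $H$, but in fact the same computation with a small positive smoothing parameter shows $M(t)=\int_0^t\!\int_Z\xi\,\tilde\eta$ takes values in some $H^A_{-\alpha}$ with $\alpha>0$ small (using the $L^p$-part of the norm of $\xi$), and then $\|\dot S(t-r)\|_{L(H^A_{-\alpha},H)}\le C(t-r)^{-\alpha\rho-1}$... which still is not integrable. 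So instead I would use the \emph{factorization method}: insert a fractional integral kernel. Write $S(t-r)=c_\theta\int_r^t (t-\sigma)^{\theta-1}(\sigma-r)^{-\theta}\,\tilde S_\theta(t-\sigma,\sigma-r)\,d\sigma$ via the resolvent analogue of the semigroup identity (or, more directly, use that $\dot S$ plus $S$ applied to a regular enough argument stays bounded), choosing $\theta\in(0,1)$ so that the resulting kernels are square/$q$-integrable against the available negative-order norms of $\xi$. Concretely: for $\frac1{q\rho}>\alpha\ge0$ pick $\theta$ with $\alpha\rho<\theta<\tfrac1q$; then the process
\[
Y(\sigma):=\int_0^\sigma\!\!\int_Z (\sigma-s)^{-\theta}\,S(\sigma-s)\xi(s,z)\,\tilde\eta(dz,ds)
\]
belongs to $L^q(\Omega;L^q(0,T;H))$ by the same estimate as in the proof of Proposition \ref{stconv} (the extra factor $(\sigma-s)^{-\theta}$ only changes the exponent of $\lambda$ and keeps the time-kernel integrable since $\theta+\alpha\rho<1$ can be arranged... more carefully $q(\theta+\alpha\rho)<1$), and one checks $Y$ has a version in $L^q(0,T;H)$; then $\FG(\xi)(t)=c_\theta\int_0^t (t-\sigma)^{\theta-1}Y(\sigma)\,d\sigma$, which is a convolution of an integrable kernel with an $L^q$-in-time function, hence continuous in $t$ (and in fact H\"older continuous), giving not merely a \cadlag but a continuous modification of this particular term.

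To assemble the argument I would therefore: (i) fix $\theta\in(0,1)$ small enough that $q(\theta+\alpha\rho)<1$ and $p(\theta+\alpha_1\rho)<1$, so both terms in the It\^o-type estimate \eqref{DD} remain finite after inserting $(\sigma-s)^{-\theta}$; (ii) define $Y$ as above and use the argument of Proposition \ref{stconv}, combined with a Kolmogorov-type continuity argument or simply the $L^q$-boundedness plus Fubini, to see $Y\in\CM^{q,q}_\lambda(0,T;H)$ and that $\sigma\mapsto Y(\sigma)$ has a measurable, a.s.\ $L^q(0,T;H)$ version; (iii) verify the factorization identity $\FG(\xi)(t)=c_\theta\int_0^t(t-\sigma)^{\theta-1}Y(\sigma)\,d\sigma$ almost surely for each $t$, via the Stochastic Fubini Theorem and the Beta-integral identity $\int_s^t(t-\sigma)^{\theta-1}(\sigma-s)^{-\theta}d\sigma=c_\theta^{-1}$ together with $S(t-s)=S((t-\sigma)+(\sigma-s))$ — here one must be slightly careful, since $S$ has no semigroup property, and instead use the resolvent identity / subordination-type representation of $S$ in terms of an analytic semigroup-like family, which holds under Assumptions \ref{ass1}–\ref{ass2} (parabolicity, $\rho\in(1,2)$); (iv) conclude that the right-hand side is a.s.\ continuous in $t\in[0,T]$ by dominated convergence (the kernel $(t-\sigma)^{\theta-1}\mathbf 1_{\{\sigma<t\}}$ converges in $L^{q'}(0,T)$ as $t$ varies when $\theta-1>-1/q$, i.e. $\theta>1-1/q'$... so one additionally needs $\theta$ in that range, which is compatible with the smallness constraints by shrinking the gap, or one splits into finitely many factorization steps); this continuous (a fortiori \cadlag) version is the desired modification, since it agrees with $\FG(\xi)(t)$ in $L^q(\Omega)$ for each fixed $t$. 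The delicate points to write out carefully are the validity of the subordination representation of $S$ needed for step (iii) and the bookkeeping of exponents ensuring all $\theta$-constraints are simultaneously satisfiable given $\alpha<\frac1{q\rho}$ and $\alpha_1<\frac1{\rho p}$; everything else follows the template already used in the proof of Proposition \ref{stconv}.
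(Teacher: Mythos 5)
Your proposed route does not work, and the obstruction is more than the ``bookkeeping'' you flag at the end; there are two independent fatal problems. First, the factorization identity $\FG(\xi)(t)=c_\theta\int_0^t(t-\sigma)^{\theta-1}Y(\sigma)\,d\sigma$ is obtained by stochastic Fubini together with the requirement that
$\int_s^t(t-\sigma)^{\theta-1}(\sigma-s)^{-\theta}S(\sigma-s)\,d\sigma=c_\theta^{-1}S(t-s)$;
the Beta integral only collapses the scalar kernel, and without the semigroup identity $S(t-\sigma)S(\sigma-s)=S(t-s)$ there is no mechanism that reassembles the $\sigma$-dependent operators $S(\sigma-s)$ into $S(t-s)$. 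A ``subordination-type representation'' strong enough to replace this is not available for merely $4$-monotone kernels and cannot be invoked without proof. Second, even if $S$ were a genuine semigroup, the factorization method is known to fail for Poisson noise for precisely the exponent clash already visible between your steps (i) and (iv): the first (non-interpolated) term of the maximal inequality \eqref{DD} applied to $Y$ forces $q\theta<1$, i.e.\ $\theta<1/q$, while continuity of $t\mapsto\int_0^t(t-\sigma)^{\theta-1}Y(\sigma)\,d\sigma$ for $Y\in L^q(0,T;E)$ requires $\theta>1/q$; these are mutually exclusive, and neither ``shrinking the gap'' nor iterating the factorization helps, since the obstructing term is not improved by interpolation. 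Indeed the conclusion you aim for cannot be correct on its face: at each atom $(z,\tau)$ of $\eta$ the convolution jumps by $S(0)\xi(\tau,z)=\xi(\tau,z)$ (as $S(0)=I$), so $\FG(\xi)$ has no continuous modification in general, and any argument producing one must be broken. Your preliminary integration-by-parts idea fails for the reason you yourself identify ($\dot S(t-r)$ is only $O((t-r)^{-1})$ into $H$, hence non-integrable), so nothing in the proposal survives.

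The paper avoids all of this by a dilation argument in the spirit of Peszat--Zabczyk and Hausenblas--Seidler: using the sector condition \eqref{eq:sector} one checks that $t\mapsto e^{-\omega|t|}S(|t|)$ is positive definite (via non-negativity of $\Re\,\widehat{s_\mu}(\omega+i\beta)$ and the Bochner--Schwartz theorem), so it admits a unitary dilation $e^{-\omega|t|}S(|t|)x=\Pi U(t)x$ with $\{U(t)\}$ a strongly continuous unitary group on a larger Hilbert space. One then writes $\FG(\xi)(t)=e^{\omega t}\Pi U(t)\int_0^t\int_Z e^{-\omega s}U(-s)\xi(s,z)\,\tilde\eta(dz,ds)$, which reduces the claim to the \cadlag property of an ordinary (non-convolved) stochastic integral composed with a strongly continuous group. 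If you want to salvage your plan, this is the ingredient you are missing: some device that removes the $t$-dependence from inside the stochastic integral without invoking a semigroup property that $S$ does not have.
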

\begin{proof}
{\rm The statement follows the same way as the proof of \cite[Proposition 3]{PZ}, see also, \cite{HS} for the original idea for using an unitary dilation.
 Note that, for any $\omega>0$, the mapping $t\to e^{-\omega |t|}S(|t|)$ is strongly continuous, positive definite, and $S(0)=I$. This follows from \cite[Proposition 6]{PZ} provided that the function $t\to r_{\omega,\mu}(t):=e^{-\omega |t|}s_\mu(|t|)$, $\mu>0$ is fixed, is positive definite where $s_\mu$ is defined by
 $$
 \dot{s}_\mu(t)+\mu (b*s_\mu)(t)=0,\quad s_\mu(0)=1.
 $$
It is easily seen, using in particular the sector condition \eqref{eq:sector} on $b$, that $s_\mu$ is non-negative, continuous and bounded by 1, cf. \cite[Corollary 1.2]{pruss}. We calculate the Fourier transform of  $r_{\omega,\mu}$ in terms of the Laplace transform $\widehat{s_\mu}$ of $s_\mu$ as
as
$$
\int_{-\infty}^\infty e^{-i\beta t} r_{\omega,\mu} (t)\,dt = 2\Re \widehat{s_\mu}(\omega+i\beta)= 2\Re \frac{1}{\omega+i\beta +\mu \hat{b}(\omega+i\beta)}\ge 0,
$$
 where the non-negativity of the last term follows from the sector condition \eqref{eq:sector} on $b$. Hence, by Bochner's theorem (more precisely, the distributional version, the Bochner-Schwarz theorem), it follows that $r_{\omega,\mu}$ is positive definite being continuous and the Fourier transform of a tempered non-negative measure.
 Hence $t\to e^{-\omega |t|}S(|t|)$ has a unitary dilation $\{U(t)\}_{t\in \mathbb{R}}$, a strongly continuous unitary group, on some Hilbert space $E_1 \hookleftarrow E$ with $e^{-\omega |t|}S(|t|)x=\Pi U(t)x$, $x\in H$, where $\Pi:E_1\to E$ is the orthogonal projection. Then
\begin{align*}
\FG (\xi)(t)&=e^{\omega t}\int_0 ^t\int_Z e^{-\omega s}e^{-\omega(t-s)}\CT(t-s) \xi(s,z)\;
\tilde\eta(dz,ds)\\
&=e^{\omega t}\int_0 ^t\int_Z e^{-\omega s}\Pi U(t-s) \xi(s,z)\;
\tilde\eta(dz,ds)\\
&=e^{\omega t}\Pi U(t)\int_0 ^t\int_Z e^{-\omega s} U(-s) \xi(s,z)\;
\tilde\eta(dz,ds).
\end{align*}
The statement then follows from the regularity of stochastic integrals, see, for example, \cite[Theorems 1 and 2, pp 181-182]{dinculeanu}.
}
\end{proof}

\section{Stochastic Volterra equations with integrable jumps}\label{sec:bddmom}
In this section we start to investigate the existence and uniqueness of a mild solution to the Volterra equation
\begin{equation}\label{eqn0}\tag{SDE}
\left\{ \begin{aligned} du(t) & = \lk( A\int_0 ^t b(t-s) u(s)\,ds\rk) \, dt  + F(t,u(t))\,dt
\\ &{}  +\int_Z  G(t,u(t),z)\,\tilde \eta(dz,dt)
\\ &{}+\int_{Z_L}  G_L(t,u(t),z)\,\eta_L(dz,dt);\quad t\in (0,T],\\
u(0)&=u_0. \end{aligned} \right.
\end{equation}
Here, $\tilde{\eta}$ is a compensated Poisson random measure on $Z$ with intensity measure $\nu$ and $\eta_L$ is Poisson random measure on $Z_L$ independent to $\tilde{\eta}$ with finite intensity measure $\nu_L$.

 In case the stochastic perturbation is Gaussian we expect that the solution process is continuous.
In case the stochastic perturbation is L\'evy, or more generally given by a Poisson random measure, we do not necessarily expect this, but rather
that the solution belongs to the space of Skorokhod functions.
If  $Y$ denotes a separable and complete metric space and $T>0$ then the
space $\mathbb{D}([0,T];Y)$ denotes the space of all right continuous
functions $x:[0,T]\to Y$ with left limits.
The space of continuous function is usually equipped with the
uniform topology. But, since $\mathbb{D}([0,T];Y)$ is complete but not
separable in the uniform topology, we equip $\mathbb{D}([0,T];Y)$ with the
Skorokhod topology in which $\mathbb{D}([0,T];Y)$ is both separable and
complete. For more information about Skorokhod space and topology
we refer to Billingsley's book \cite{billingsley1} or Ethier and
Kurtz \cite{ethier}.

\begin{definition}[Mild Solution]\label{def:ms}
We say that $u$ is a mild solution of equation \eqref{eqn0}  in $H_0$,
if $u$ belongs $\PP$-a.s.\ to $\DD (0,T;H_0)$, $u$ is adapted to $(\CF_t)_{t\ge 0}$ and
we have $\PP$-a.s.,
$$\int_{0}^{t}\lk|S(t-s)F(s,u(s))\rk|_{H_0}\,ds<\infty ,
$$
and
$$
\int_{0}^{t}\int_Z\lk| S(t-s)G(s,u(s),z)\rk|_{H_0}^ p \, \nu(dz)ds<\infty.
$$
Furthermore, we suppose that for all $z\in Z_L$, and $r\in[0,T]$, $\PP$-a.s.,
$$
\lk|S(t-r)G_L(r-,u(r-),z)\rk|_{H_0} <\infty,
$$
and
for all $t\in [0,T]$ it holds $\PP$-a.s. that
\begin{align}\label{varcons}
u(t)&=S(t)u_0+\int_{0}^{t}S(t-s)F(s,u(s))\,ds
\nonumber
\\
\nonumber
&\qquad +\int_{0}^{t}\int_Z S(t-s)G(s,u(s),z)\,\tilde \eta (dz,ds)
\\
&\qquad
+ \int_{0}^{t}\int_{Z_L} S(t-s)G_L(s,u(s),z)\, \eta_L (dz,ds). 
\end{align}
\end{definition}
First we will show existence and uniqueness of a mild solution when  $G_L=0$; that is, for the case where there are no big jumps,
and therefore we consider the equation
\begin{equation}\label{eqnohnebj}\tag{SDE1}
\left\{ \begin{aligned} du(t) & = \lk( A\int_0 ^t b(t-s) u(s)\,ds\rk) \, dt + F(t,u(t))\,dt
\\ &\qquad+\int_{Z}  G(t,u(t),z)\,\tilde \eta(dz,dt) ;\quad t\in [0,T],\\
u(0)&=u_0. \end{aligned} \right.
\end{equation}
Our assumptions on $F$, $G$ and the initial condition are as follows.
\begin{ass}\label{hypogeneral}
Fix $p<q$. 
We will assume that

\renewcommand{\theenumi}{(\roman{enumi})}
\renewcommand{\labelenumi}{(\roman{enumi})}
\begin{enumerate}
\item there exists a number $\alpha_G<\frac 1 {q\rho}$ such that $G$ considered as a mapping $G: [0,T]\times H \to L^ p (Z,\nu;H_{-\alpha_G}^ A)\cap L^ q
    (Z,\nu;H_{-\alpha_G}^ A)$ is
    continuous  and for all $t\in[0,T]$ it is Lipschitz continuous with Lipschitz constant $L_G$ in the second variable;

\item there exists a  number $ \alpha_F< \frac 1 {\rho}$ such that $F: [0,T]\times H \to H_{-\alpha_F}^ A$ is continuous and
for all $t\in[0,T]$ it is Lipschitz continuous with Lipschitz constant $L_F$ in the second variable;
\item there exists a  number $\alpha_I<\frac 1 {q\rho}$ such that  $u_0\in L^ q(\Omega; H_{-\alpha_I}^ A)$ is  $\CF_0$-measurable.
\end{enumerate}
\end{ass}
%

\noindent
Now, we can formulate one of our main results.

\begin{theorem}\label{local_ex}
Suppose that Assumption \ref{ass1} and Assumption \ref{ass2} are satisfied.
Let $\eta$ be a Poisson random measure with intensity measure $\nu$. 
In addition, suppose that the coefficients $F$ and $G$ and the initial condition $u_0$
and the numbers $\alpha_F,\alpha_G$ and $\alpha_I$ satisfy Assumption \ref{hypogeneral}.
If
\begin{equation*}
\alpha\ge \alpha_I,\quad \del{\left((\alpha_G-\alpha)+1\right)  \frac {pq}{q-p}< 1} \alpha\ge \alpha_G, \mbox{ and } (\alpha_F-\alpha)\rho<1-\frac{1}{q},
\end{equation*}
then there exists a  mild solution of \eqref{eqnohnebj} in $H_{-\alpha}$. In particular, 
$$\PP\lk( u\in \DD([0,T];H_{-\alpha}^ A)\rk)=1.$$
If, in addition,  also
\DEQSZ\label{addass}
(\alpha_F-\alpha_I)\rho<1-\frac 1q \quad \mbox{{and}}\quad \alpha_I\ge \alpha_G,
\EEQSZ
then, for each $t\in [0,T]$, we have  $u(t)\in L ^q(\Omega;H_{-\alpha_I} ^A)$.
%
\end{theorem}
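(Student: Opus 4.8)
The plan is to prove existence and uniqueness of a mild solution by a Banach fixed point argument in the space $\CM^{q,q}_\lambda(0,T;H_{-\alpha}^A)$ for $\lambda$ sufficiently large, exactly in the spirit of Corollary \ref{det_st} and Proposition \ref{stconv}. First I would set up the solution map $\Phi$ on $\CM^{q,q}_\lambda(0,T;H_{-\alpha}^A)$ by
\[
\Phi(u)(t) = S(t)u_0 + \int_0^t S(t-s)F(s,u(s))\,ds + \int_0^t\int_Z S(t-s)G(s,u(s),z)\,\tilde\eta(dz,ds),
\]
and check that $\Phi$ maps this space into itself. The first term is handled by Lemma \ref{smoothing}(a) together with $\alpha\ge\alpha_I$ and $u_0\in L^q(\Omega;H_{-\alpha_I}^A)$: one estimates $\|S(\cdot)u_0\|_{\CM^{q,q}_\lambda(0,T;H_{-\alpha}^A)}$ using $\|S(t)\|_{L(H_{-\alpha_I}^A,H_{-\alpha}^A)}\le Ct^{-(\alpha-\alpha_I)\rho}$ and the condition $(\alpha_F-\alpha_I)\rho<1-1/q$ is not needed here, only integrability of $t^{-(\alpha-\alpha_I)\rho q}$ near $0$, which holds since $\alpha-\alpha_I<1/(q\rho)$ (a consequence of $\alpha_I<1/(q\rho)$... actually here I would need $\alpha<1/(q\rho)$, which follows from the hypotheses combined with $\alpha\ge\alpha_I$, $\alpha\ge\alpha_G$ — I should double-check this but it is forced by the integrability demands). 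The deterministic drift term is controlled by Corollary \ref{det_st} applied with the space $H_{-\alpha_F}^A$ in the role of $H_{-\alpha}$ there, using the Lipschitz/linear growth of $F$ from Assumption \ref{hypogeneral}(ii) and the condition $(\alpha_F-\alpha)\rho<1-1/q$ to guarantee the relevant singular kernel is $L^{q'}$-integrable (via Young's inequality, as in Proposition \ref{det_s}). The stochastic convolution term is controlled by Proposition \ref{stconv} with $\alpha_G=\alpha=\alpha_1$ (both $L^p$ and $L^q$ integrability of $G$ being available by Assumption \ref{hypogeneral}(i)), which requires precisely $\alpha<1/(q\rho)$ and $\alpha<1/(p\rho)$ — both implied by $\alpha=\alpha_G<1/(q\rho)<1/(p\rho)$ after noting $p<q$; wait, I should be careful: we only know $\alpha\ge\alpha_G$ and $\alpha_G<1/(q\rho)$, so to invoke Proposition \ref{stconv} I need $\alpha<1/(q\rho)$ too, and again this must be deduced from the constraints relating $\alpha$ to the other exponents, or one should simply work at $\alpha=\alpha_G$ if that is admissible. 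I would be explicit about which $\alpha$ the fixed point lives in and then bootstrap/embed to the stated $H_{-\alpha}$.

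Next I would establish the contraction property: for $u,v\in\CM^{q,q}_\lambda(0,T;H_{-\alpha}^A)$, using Lipschitz continuity of $F$ and $G$ and the same three estimates, one gets
\[
\|\Phi(u)-\Phi(v)\|_{\CM^{q,q}_\lambda(0,T;H_{-\alpha}^A)} \le C(\lambda)\,\|u-v\|_{\CM^{q,q}_\lambda(0,T;H_{-\alpha}^A)},
\]
where $C(\lambda)\to 0$ as $\lambda\to\infty$ — this is the key role of the weighted norms and is exactly the content of the "$\lim_{\lambda\to\infty}C(\lambda,\alpha)=0$" clauses in Corollary \ref{det_st} and Proposition \ref{stconv}. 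Choosing $\lambda$ large enough to make $C(\lambda)<1$ yields a unique fixed point $u^*$, which is the mild solution; uniqueness among mild solutions with the prescribed integrability follows by a standard argument (any two solutions coincide on $[0,T]$ since the map is a contraction for large $\lambda$, and the $\lambda$-weighted norm is equivalent to the unweighted one on $[0,T]$). The \cadlag path regularity $\PP(u^*\in\DD([0,T];H_{-\alpha}^A))=1$ is then obtained by applying Proposition \ref{prop:cadlag} to the stochastic convolution $\FG(G(\cdot,u^*(\cdot),\cdot))$ (with $E=H_{-\alpha}^A$), noting that $S(\cdot)u_0$ is continuous in $H_{-\alpha}^A$ and the deterministic convolution $\mC(F(\cdot,u^*(\cdot)))$ is continuous as well (being a convolution of an $L^q$ function against an operator family that is continuous off $0$ with an integrable singularity).

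For the additional regularity claim — that under \eqref{addass} one has $u^*(t)\in L^q(\Omega;H_{-\alpha_I}^A)$ for every fixed $t\in[0,T]$ — I would not re-run the fixed point but instead insert the known solution into the variation of constants formula \eqref{varcons} and estimate each term at a fixed time $t$ in the $H_{-\alpha_I}^A$-norm directly. The term $S(t)u_0$ is in $L^q(\Omega;H_{-\alpha_I}^A)$ trivially since $S(t)$ is bounded on $H_{-\alpha_I}^A$. For the drift term I would use Lemma \ref{smoothing}(a) to write $\|S(t-s)F(s,u^*(s))\|_{H_{-\alpha_I}^A}\le C(t-s)^{-(\alpha_F-\alpha_I)\rho}\|F(s,u^*(s))\|_{H_{-\alpha_F}^A}$, take $L^q(\Omega)$ norms, use the linear growth of $F$ and the already-established bound $u^*\in\CM^{q,q}(0,T;H_{-\alpha}^A)$ (plus Hölder in $s$), and the hypothesis $(\alpha_F-\alpha_I)\rho<1-1/q$ ensures $\int_0^t(t-s)^{-(\alpha_F-\alpha_I)\rho q'}\,ds<\infty$ so that the time integral converges — this is precisely where the first condition in \eqref{addass} is used. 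For the stochastic convolution term I would apply the Itô-type inequality \eqref{DD} with $E=H_{-\alpha_I}^A$: the two resulting terms involve $\int_0^t\int_Z(t-s)^{-q\rho\alpha_I}\|G(s,u^*(s),z)\|_{H_{-\alpha_I}^A}^q\nu(dz)\,ds$ and its $L^p$-analogue, and these are finite because $\alpha_I<1/(q\rho)$ makes the kernels integrable, while $\alpha_I\ge\alpha_G$ (the second condition in \eqref{addass}) guarantees $G$ actually maps into $L^p\cap L^q(Z,\nu;H_{-\alpha_I}^A)$ with the right bounds coming from Assumption \ref{hypogeneral}(i) and linear growth. I expect the main obstacle throughout to be bookkeeping the interplay of the exponents $\alpha,\alpha_F,\alpha_G,\alpha_I,p,q,\rho$ — making sure every singular kernel that appears is genuinely integrable under exactly the stated inequalities, and in particular verifying that the $\alpha$ in which the fixed point is performed can be taken to satisfy both $\alpha<1/(q\rho)$ and $\alpha<1/(p\rho)$ so that Proposition \ref{stconv} applies — rather than any conceptual difficulty, since the three workhorse estimates (Corollary \ref{det_st}, Proposition \ref{stconv}, Proposition \ref{prop:cadlag}) are already in place.
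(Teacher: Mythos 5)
Your overall strategy --- a Banach fixed point in a $\lambda$-weighted space built on Corollary \ref{det_st} and Proposition \ref{stconv}, followed by a path-regularity step and a separate $L^q(\Omega;H_{-\alpha_I}^A)$ estimate obtained by plugging the solution back into the variation-of-constants formula --- is the paper's strategy, and your treatment of the final claim under \eqref{addass} is essentially identical to the paper's Step III. There is, however, a genuine gap in the choice of the fixed-point space, one you half-notice but do not resolve. You propose to contract in $\CM^{q,q}_\lambda(0,T;H_{-\alpha}^A)$ and then hope that $\alpha<1/(q\rho)$ ``is forced by the integrability demands''. It is not: the hypotheses only give $\alpha\ge\alpha_I$, $\alpha\ge\alpha_G$ and $(\alpha_F-\alpha)\rho<1-\tfrac1q$, all of which become \emph{easier} as $\alpha$ grows, so $\alpha$ may well exceed $1/(q\rho)$ and Proposition \ref{stconv} would not apply in the form you want. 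Worse, Assumption \ref{hypogeneral} provides the Lipschitz continuity of $F$ and $G$ only for $H$-valued second arguments, so the compositions $F(s,u(s))$ and $G(s,u(s),z)$ are not controlled when $u(s)$ merely lies in $H_{-\alpha}^A$ with $\alpha>0$; working ``at $\alpha=\alpha_G$'' does not repair this either. The paper's resolution is to run the contraction in $\mathfrak{X}_\lambda=\CM^{q,q}_\lambda(0,T;H)$, i.e.\ at regularity $0$ on the output side, which is exactly what Corollary \ref{det_st} and Proposition \ref{stconv} deliver (inputs of regularity $-\alpha_F$, $-\alpha_G$, $-\alpha_I$, outputs in $H$), and to reserve the parameter $\alpha$ purely for the pathwise statement, the embedding $H\hookrightarrow H_{-\alpha}^A$ then being free.

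A secondary, under-justified point: the continuity of the drift convolution $t\mapsto\int_0^tS(t-s)F(s,u(s))\,ds$ in $H_{-\alpha}^A$ does not follow merely from ``an integrable singularity''. The paper proves it via a two-term increment estimate (splitting the integral at $r$ and controlling the second piece through $\dot S$ and Lemma \ref{smoothing}(b)) combined with the Kolmogorov continuity theorem, and this is precisely where the hypothesis $(\alpha_F-\alpha)\rho<1-\tfrac1q$ enters --- not, as you suggest, in the fixed-point mapping property, which needs only Assumption \ref{hypogeneral}. Your handling of the stochastic convolution's \cadlag modification via Proposition \ref{prop:cadlag} (using $\alpha\ge\alpha_G$) and of the term $S(t)u_0$ matches the paper.
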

\begin{proof}
{\rm First we will prove the existence and uniqueness of a process $u$ satisfying \eqref{varcons} using Banach's fixed point theorem.  Then, we will investigate the regularity of the trajectories of $u$ in $H_{-\alpha}$; that is,\ the \cadlag property, to show that $u$ a mild solution in $H_{-\alpha}$ according to Definition \ref{def:ms}. Finally, we will prove that $u(t)\in L ^q(\Omega;H_{-\alpha_I} ^A)$
whenever \eqref{addass} holds.
\medskip

\paragraph{\bf Step I}
Put
$$\mathfrak{X} _\lambda := \CM^{q,q}_\lambda (0,T ;H )   .
$$
For fixed $u_0\in L^ q(\Omega; H_{-\alpha_I}^ A)$ let us define the integral operator
\DEQSZ\label{mZ}
\\
\nonumber (\mathfrak{Z}u)(t):= (\mT u_0)(t)+
\lk(\mathfrak{F}u\rk) (t)+\lk( \mathfrak{G}u\rk)(t), \quad t\in[0,T],\, u\in\mX_\lambda,
  \EEQSZ
where
$\mT:H_{-\alpha_I}^ A \to \mathfrak{X}$
is defined by
$$(\mT u_0)(t) := \CT(t) u_0,
\quad  u_0\in  H_{-\alpha_I}^ A, $$
$\mF=\mC\circ F$, where $\mC$ is defined in \eqref{det-con}, and  $\mG=\mS\circ g$, where $\mS$ is defined in \eqref{eqn-stoch_ter}.
We will show that there exists a $\lambda>0$ such that for any  $u_0\in L^\rho(\Omega;H_{-\alpha_I}^ A)$,
 $\mZ$ maps $\mathfrak{X}_\lambda$ into $\mathfrak{X}_\lambda$, and
that $\mZ$ is a strict contraction on $\mathfrak{X}_\lambda$.

The fact that $\mT$ maps $H_{-\alpha_I}^ A $ in $ \mX_\lambda$,  follows from the Lemma \ref{smoothing}.
In particular, a straightforward calculations gives
$$
\lk\| \mT u_0\rk\|^ q_{L^ {q}(0,T;H)  } \le \int_0^T e^ {-\lambda t}|\CT(t) u_0|_H^ q \, dt\le
 C\int_0^T e^ {-\lambda t} t ^ {-\rho\alpha_Iq} \, dt\, \n u_0\n_{H_{-\alpha_I}^ A}^q,
$$
and therefore,
\DEQSZ\label{operatorCT}
 \lk\| \mT u_0\rk\|_{L^ {q}(0,T;H)}  &\le &
 C\Gamma( 1-\rho\alpha_Iq)^ \frac 1q \, \lambda ^ {\rho  \alpha_I-\frac 1q}\, \n u_0\n _{H_{-\alpha_I}^ A}.
\EEQSZ
Corollary \ref{det_st} and the  Lipschitz continuity of $F$ imply that the operator
\DEQSZ\label{operatorf}
 \mathfrak{F} :\mX_\lambda \longrightarrow \mX_\lambda.
\EEQSZ
is Lipschitz continuous  with Lipschitz constant $L_F\, C(\alpha_F,\lambda)$ such that
$\lim_{\lambda\to \infty }$ $  C(\alpha_F,\lambda)=0$.
Proposition \ref{stconv} and the  Lipschitz continuity of $G$ implies that the operator
\DEQSZ\label{operatorg}
 \mathfrak{G} :\mX_\lambda \longrightarrow \mX_\lambda.
\EEQSZ
is Lipschitz continuous too with Lipschitz constant $L_G\, C(\alpha_G,\lambda)$ where
$$\lim_{\lambda\to \infty }  C(\alpha_G,\lambda)=0.
$$
Hence, for all $u_0\in H_{-\alpha_I}^ A$ and for all
$u\in \mX_\lambda$ we have
$$
\mathfrak{Z}u=\mT u_0+ \mathfrak{F} u +\mathfrak{G} u\in
\mX_\lambda.
 $$
In particular, the operator $\mZ$ is Lipschitz continuous with Lipschitz constant $L_\lambda $, such that $L_\lambda\to 0$ as $\lambda \to \infty$.
Hence, for $\lambda $ sufficiently large, there exists a fixed point in $\mX_\lambda$.
%


\medskip

\paragraph{\bf Step II}
Next, we show that $u$ has a \cadlag in modification in $H_{-\alpha} ^A$.
Notice that the resolvent family $S$ is strongly continuous in $H_{-\alpha}^A$; that is, for all $x\in H_{-\alpha}^A$
the mapping $$[0,\infty)\ni t\mapsto \CT(t) x \in H_{-\alpha}^A
$$
is continuous. As $u_0\in H^A_{-\alpha}$ $\PP$-a.s.
we have that
\DEQSZ\label{ersteszz}
\PP\lk( \lim_{h\to 0} \lk|\lk[ \CT(t+h)-\CT(t)\rk] u_0\rk|_{H^A_{-\alpha}}=0\rk) =1.
\EEQSZ
%

Next we show that the process
$$[0,\infty)\ni t\mapsto  \mF u (t) =\int_0^ {t} \CT( t-s)  F(s,u (s)) \, ds
$$
has even a continuous modification in $H_{-\alpha}^A$. Let $0\le r<t \le T$ and $\sigma>1$ and $\delta>1$ arbitrary. Then
\begin{align*}
&\EE\left\| \mF u (t)- \mF u (r)\right\|_{H_{-\alpha}}^\delta
\leq C\left(\EE\left\|\int_r^ {t} \CT( t-s)  F(s,u (s)) \, ds\right\|_{H_{-\alpha}}^\delta\right.\\
&\left.\quad+\EE\left\|\int_0^ {r} (\CT( t-s) -\CT(r-s)) F(s,u (s)) \, ds\right\|_{H_{-\alpha}}^\delta\right):=(e_1)^\delta+(e_2)^{\delta}.
\end{align*}
To estimate for $e_1$ we use H\"older's inequality and, from Step I, the fact that $u\in \CM^{q,q}_\lambda (0,T ;H )$ together with the Lipschitz continuity of $F$ and Jensen's inequality, to calculate
\begin{align}
& e_1\leq \EE \int_r^{t} \|\CT(t-s)\|_{L(H_{-\alpha_F},H_{-\alpha})}\|F(s,u(s))\|_{H_{-\alpha_F}}\, ds\\
&\leq  C \EE \int_r^ {t} (t-s)^{\left(-(\alpha_F-\alpha)\rho\right)\wedge 0}(1+\|u(s)\|)\,ds\\
&\leq C  \left(\int_r^ {t} (t-s)^{(-(\alpha_F-\alpha)\rho)\frac{q}{q-1}\wedge 0}\,ds\right)^{\frac{q-1}{q}} \left(\int_0^r(1+\EE\|u(s)\|^q)\,ds\right)^{\frac{1}{q}} \\
&\leq C (t-r)^{\left(-(\alpha_F-\alpha)\rho+\frac{q-1}{q}\right) \wedge \frac{q-1}{q}},
\end{align}
provided that $(-(\alpha_F-\alpha)\rho)\frac{q}{q-1}>-1$, that is, $(\alpha_F-\alpha)\rho<1-\frac{1}{q}.$
Next we bound $e_2$. We use Fubini's Theorem, H\"older's inequality and, from Step I, the fact that $u\in \CM^{q,q}_\lambda (0,T ;H )$ together with the Lipschitz continuity of $F$ and Jensen's inequality to get
\begin{align*}
&e_2\leq \EE  \int_0^r \int_r^t \|\dot{\CT}(v-s)\|_{L(H_{-\alpha_F},H_{-\alpha})}\,dv \|F(s,u(s))\|_{H_{-\alpha_F}}\, ds\\
&\leq C\EE \int_r^t \int_0^r (v-s)^{\left(-(\alpha_F-\alpha)\rho-1\right)\wedge -1}(1+\|u(s)\|)\,ds\,dv\\
&\leq C \int_r^t \left(\int_0^r (v-s)^{(-(\alpha_F-\alpha)\rho-1)\frac{q}{q-1}\wedge \frac{q}{1-q}}\,ds\right)^{\frac{q-1}{q}}\left(\int_0^r(1+\EE\|u(s)\|^q)\,ds\right)^{\frac{1}{q}} \,dv\\
&\leq C \int_r^t v^{\left(-(\alpha_F-\alpha)\rho-1+\frac{q-1}{q}\right)\wedge \left(-1+\frac{q-1}{q}\right)}\,dv\leq C(t-r)^{\left(-(\alpha_F-\alpha)\rho+\frac{q-1}{q}\right) \wedge \frac{q-1}{q}},
\end{align*}
provided that $-(\alpha_F-\alpha)\rho+\frac{q-1}{q}>0$, that is, $(\alpha_F-\alpha)\rho<1-\frac{1}{q}.$ Therefore, choosing $\delta$ large enough so that $\delta (-(\alpha_F-\alpha)\rho+\frac{q-1}{q})>1 $, it follows from Kolomogorov continuity theorem, (see, e.g., \cite[Theorem 1.4.1]{Kunita}), that the process $[0,\infty)\ni t\mapsto  \mF u (t)
$ has a continuous modification in $H_{-\alpha}^A$.
\del{First, by Theorem 1, \cite[page 181]{dinculeanu} the process
$$
[0,T]\ni t \mapsto \xi(t) =\int_0^t \CT(-s) G(u_R(s);z)\tilde \eta(dz,ds)
$$
has a \cadlag modification, denoted by $\bar \xi$.
}

It remains to show that
the process
$$[0,\infty)\ni t\mapsto \mG u (t)= \int_0^ {t} \int_Z\CT(t-s) G(s,u (s),z)  \tilde \eta(dz,ds)
$$
has a  \cadlag modification in $H_{-\alpha}^A$, but this follows from Proposition \ref{prop:cadlag} as $\alpha\ge \alpha_G$.
\medskip

\paragraph{\bf Step III}
Finally, we have to show that under the additional condition \eqref{hypogeneral} the random variable $u(t)$ is $H_{-\alpha_I} ^A$--valued for any $t\in[0,T]$.
In fact, we will show that there exists a constant $C(\lambda)>0$ such that for any process $\xi\in\mX_\lambda$ and $t\in(0,T)$
\DEQSZ\label{assertion1}
 \EE |(\mZ \xi) (t)|_{H_{-\alpha_I} ^A }^q &\le & C\lk( 1+ |u_0|_{H_{-\alpha_I}^A}+ C(\lambda) |\xi|_{\mX_\lambda} \rk).
\EEQSZ
We have that
\DEQS
\EE |u  (t)|_{H_{-\alpha_I} ^A} ^q&\le & C\lk( \EE |\CT(t) u _0|_{H_{-\alpha_I} ^A}^q + \EE \lk|\int_0 ^t \CT(t-s) F(s,u  (s))\, ds\rk|_{H_{-\alpha_I} ^A}^q\rk.
\\
&&{}\lk. + \EE \lk| \int_0 ^t \int_Z \CT(t-s) G(s,u  (s),z)\tilde \eta (dz,ds)\rk|_{H_{-\alpha_I} ^A}^q\rk) .
\EEQS
Since $\CT(t):H_{-{\alpha_I} }^A \to H_{-\alpha_I} ^A$ is a bounded operator,
$$ \EE |\CT(t) u_0|^q _{H_{-\alpha_I} ^A} \le C |u_0|^q _{H_{-\alpha_I} ^A} .
$$
Next, we will treat the second summand. Here, Minkowski's inequality, Lemma \ref{smoothing} and the Lipschitz property of $F$ taking into account that \eqref{addass} holds
together with H\"older's inequality give
\DEQS
\lqq{ \EE \lk|\int_0 ^t \CT(t-s) F(s,u  (s))\, ds\rk|_{H_{-\alpha_I} ^A}^q}
&&
\\
&\le &  \EE\lk( \int_0 ^t \lk| \CT(t-s) F(s,u  (s))\rk|_{H_{-\alpha_I} ^A}\, ds\rk)^q\\
& \le&  \EE \lk( \int_0 ^t (t-s)^ {-((\alpha_F-\alpha_I)\wedge 0) \rho} \lk|  F(s,u  (s))\rk|_{H_{-\alpha_F} ^A}\, ds \rk)^q
\\
&\le&
C \lk( 1+\EE \int_0 ^t   \lk|  u  (s)\rk|^ q_{H}\, ds
\rk). \EEQS
Since the RHS can be estimated by $C(1+|u  |_{\mathfrak{X}_\lambda}^ q)$ we now bound the third summand.
Inequality \eqref{DD},
gives 
\DEQS
\lqq{  \EE \lk| \int_0 ^t \int_Z \CT(t-s) G(s,u  (s),z)\tilde \eta (dz,ds)\rk|_{H_{-\alpha_I} ^A } ^q} &&
\\
 &\le&  \EE \int_0 ^t \int_Z |\CT(t-s) G(s,u  (s),z)| _{H_{-\alpha_I} ^A} ^q \, \nu(dz)\, ds
 \\
 &&+{}
 \EE\lk(  \int_0 ^t \int_Z |\CT(t-s) G(s,u  (s),z)| _{H_{-\alpha_I} ^A} ^p \, \nu(dz)\, ds\rk) ^\frac qp
\\
 &\le&  \EE \int_0 ^t \int_Z |\CT(t-s) G(s,u  (s),z)| _{H_{-\alpha_I} ^A} ^q \, \nu(dz)\, ds
  \\
 &&+{}
C(t) \EE \int_0 ^t \lk( \int_Z |\CT(t-s) G(s,u  (s),z)| _{H_{-\alpha_I} ^A} ^p \, \nu(dz)\rk)^\frac qp \, ds.
 \EEQS
Since $\CT(t):H^A_{-\alpha_I}\to H^A_{-\alpha_I}$ is bounded, we get 
\DEQS
 &&\EE \lk| \int_0 ^t \int_Z \CT(t-s) G(s,u  (s),z)\tilde \eta (dz,ds)\rk|_{H_{-\alpha_I} ^A } ^q\\
& &\quad\le   \EE \int_0 ^t | G(s,u  (s),z)| _{L^q(Z,\nu;H_{-\alpha_I} ^A)}^q \, ds
+\EE  \int_0 ^t  | G(s,u  (s),z)| _{L^p(Z,\nu;H_{-\alpha_I} ^A)} ^q \, ds
.
\EEQS
Finally, using the fact that $\alpha_I\ge \alpha_G $ we get, by the Lipschitz continuity of $G$, that
\DEQS
 &&\EE \lk| \int_0 ^t \int_Z \CT(t-s) G(s,u  (s),z)\tilde \eta (dz,ds)\rk|_{H_{-\alpha_I} ^A } ^q\\
  &&\quad\le  C\,\lk( 1+  \EE \int_0 ^t |u  (s)| _{H} ^q \,  ds\rk) =C\lk( 1+  C(\lambda)|u  |_{\mathfrak{X}_\lambda}\rk),
\EEQS
which completes the proof of assertion \eqref{assertion1} and also the proof of the theorem.


}

\end{proof}

\section{The stochastic Volterra equations with non-integrable iumps}\label{sec:ubddj}

As in the previous section, let $Z$ and $Z_L$ be two Banach spaces,  $\tilde \eta$ be a compensated Poisson random measure on $Z$ with intensity measure $\nu$, and $\eta_L$ be a Poisson random measure on $Z_L$ independent to $\tilde{\eta}$ with finite intensity measure $\nu_L$. In applications, the first Poisson random measure will be of infinite activity capturing the small jumps, the second one is of finite activity capturing the large jumps. \\

In order to realize the independent random measures we consider $\tilde \eta$ be a compensated Poisson random measure on $(Z\times \RR_+,\mathcal{B}(Z)\otimes \mathcal{B}({\mathbb{R}_+}))$ over $\mathfrak{A} ^S=(\Omega^S,\CF^S,\{\CF^S_t\}_{t\in [0,T]},\PP^S)$ with intensity measure $\nu$ where
$$\CF^S=\sigma \{ \eta(B ,[0,s]): B \in \CB(Z), s\in[0,T] \}$$ and
$$\CF^S_t=\sigma \{ \eta(B ,[0,s]): B \in \CB( Z), s\in[0,t] \}, 0\leq t\leq T.$$
Furthermore, let $\eta_L$ be a Poisson random measure on $(Z_L\times \RR_+,\mathcal{B}(Z_L)\otimes \mathcal{B}({\mathbb{R}_+}))$ over $\mathfrak{A} ^L=(\Omega^L,\CF^L,\{\CF^L_t\}_{t\in [0,T]},\PP^L)$ with finite intensity measure $\nu_L$ where
$$\CF^L=\sigma \{ \eta(B ,[0,s]): B \in \CB(Z_L), s\in[0,T] \}$$ and
$$\CF^L_t=\sigma \{ \eta(B ,[0,s]): B \in \CB( Z_L), s\in[0,t] \},\quad 0\leq t\leq T.$$
Let $\Omega:=(\Omega^S\times\Omega^L)$, $\mathcal{F}:=\CF^S\otimes \CF^L$, $\CF_t:=\CF^S_t\otimes \CF^L_t$ and $P=P^S\otimes P^L$. With an abuse of notation we denote
by $\tilde \eta$ the compensated Poisson random measure on $(Z\times \RR_+,\mathcal{B}(Z)\otimes \mathcal{B}({\mathbb{R}_+}))$ over $(\Omega,\CF,\{\CF_t\}_{t\in [0,T]},\PP)$ defined by $\Omega \ni (\omega^S,\omega^L)\mapsto \tilde \eta(\omega^S,\cdot)$ and by $\eta_L$ the Poisson random measure on $(Z_L\times \RR_+,\mathcal{B}(Z_L)\otimes \mathcal{B}({\mathbb{R}_+}))$ over $(\Omega,\CF,\{\CF_t\}_{t\in [0,T]},\PP)$ with finite intensity measure $\nu_L$ defined by  $\Omega \ni (\omega^S,\omega^L)\mapsto  \eta_L(\omega^L,\cdot)$.

\medskip
In this section, we will show that there exists a unique global mild solution over $(\Omega,\CF,\{\CF_t\}_{t\in [0,T]},\PP)$ even in the case of unbounded jumps; i.e., we consider the equation
\begin{equation}\label{SDE-bigjumps}
\left\{ \begin{aligned} du(t) & = \lk( A\int_0 ^t b(t-s) u(s)\,ds\rk) \, dt  + F(t,u(t))\,dt \\
& {} + \int_Z G(t,u(t)z)\,\tilde \eta(dz,dt)+
\int_{Z_L} G_L(t,u(t),z)\, \eta_L(dz,dt);\,  t\in (0,T],\\
u(0)&=u_0. \end{aligned} \right.
\end{equation}
Here we will use the representation of
the L\'evy process process or compound Poisson process associated with $\eta_L$ in terms of a sum over all its jumps.
If there would be no memory term, one would simply glue together the solutions between the large jumps. Since we have a non trivial memory term, the solution
does not generate a Markov process and it is not possible to glue together the solutions. Another possibility would be to adopt the approach taken in \cite[Section 9.7]{PESZAT+ZABZCYK} and truncate the intensity measure $\nu$ and employ a stopping time argument. However, then the conditions on $G_L$ would have to be strengthen significantly which we would like to avoid. Therefore, we take a different approach and we first proof existence and uniqueness of a solution given the large jumps and show thereafter that the argument remains valid when the jumps are the jumps of a \levy process.

To this end, we suppose that the Poisson random measure $\eta_L$ with finite intensity measure $\nu_L$ on $Z_L$ is constructed the following way (see, \cite[Theorem 6.4]{PESZAT+ZABZCYK}). 
Let $\sigma=\nu_L(Z_L)$, and let $\{ \tau_n:n\in\NN\}$ be a family of independent exponential distributed real-valued random variables on $\Omega ^L$ with parameter $\sigma$. Consider
\DEQSZ\label{stoppingtimes}
 T_n=\sum_{j=1}^n \tau_j,\quad n\in\NN,
\EEQSZ
and let $\{ N(t):t\ge 0\}$ be the counting process defined by
$$ N(t) :=\sum_{j=1}^\infty 1_{[T_j,\infty)}(t),\quad t\ge 0.
$$
Observe that for any $t>0$, $N(t)$ is a Poisson distributed random variable with parameter $\sigma t$.
Let $\{ Y_n:n\in \NN\}$ be a family of independent, $\frac 1 \sigma \nu_L$-distributed, $Z_L$-valued random variables on $\Omega ^L$. 
Then,
$$
 \eta_L=\sum_{j=1}^{\infty}\delta_{(Y_j,T_j)}
 $$
is a Poisson random measure with intensity measure $\nu_L$ and
\begin{multline*}
\int_{0}^{t}\int_{Z_L} S(t-s)G_L(s,u(s),z)\, \eta_L (dz,ds)\\=\sum_{i=1}^ {N(t)} 1_{[T_i,T]} (t)\CT(t-T_i)G_L(T^ -_i,u(T ^-_i),Y_i),\quad t\in [0,T].
\end{multline*}

%
In the following, we will use the same notation as in the Section \ref{sec:bddmom}.
We introduce the following assumption on the finite intensity measure $\nu_L$.
\begin{ass}\label{slowvarbigjumps}
 For some $C,\beta>0$, the measure  $\nu_L$ satisfies
$$
 \nu_L \lk( \lk\{ z\in Z_L: |z|> x\rk \}\rk)\leq C x^{-\beta}
$$
\end{ass}
%
We make the following assumption on the mapping $G_L$.
\begin{ass}\label{assbigjumps}
The mapping
\DEQS
G_L: [0,T]\times H_{-\alpha_I} ^A\times Z_L &\longrightarrow &H ^A_{-\alpha_I},
\\
(t,x,z) &\mapsto & G_L(t,x,z),
\EEQS
is continuous and Lipschitz continuous in the second variable
with Lipschitz constant $L_{G_L}(z)$, $z\in Z_L$, uniformly in $t\in [0,T]$, such that $L_{G_L}(z)\leq M(1+\|z\|_{Z_L})$, $z\in Z_L$.
\end{ass}
The main result of this section is as follows.

\begin{theorem}\label{global}
Suppose that Assumption \ref{ass1} and Assumption \ref{ass2} are satisfied and that
 and suppose that $\nu_L$ satisfies Assumption
\ref{slowvarbigjumps}.
In addition, suppose that the data in  \eqref{SDE-bigjumps} satisfy Assumption \ref{hypogeneral}  and Assumption \ref{assbigjumps}.
and that
\DEQSZ\label{addasszweites}
(\alpha_F-\alpha_I)\rho<1-\frac 1q \quad \mbox{{and}}\quad \alpha_I\ge \alpha_G.
\EEQSZ
If
\begin{equation*}
\alpha\ge \alpha_I,\quad \del{\left((\alpha_G-\alpha)+1\right)  \frac {pq}{q-p}< 1}\alpha\ge \alpha_G, \mbox{ and } (\alpha_F-\alpha)\rho<1-\frac{1}{q},
\end{equation*}
then there exists a  mild solution of \eqref{SDE-bigjumps}  in $H_{-\alpha}$. In particular, 
$$\PP\lk( u\in \DD([0,T];H_{-\alpha}^ A)\rk)=1.$$
Finally, for any $t\in[0,T]$, $u(t)$ is  a $H_{-\alpha_I} ^A$--valued
random variable.

\del{In addition,
\begin{enumerate}
  \item for $0<m<\beta$ we have $\EE\int_0^ t |u(s)|^m \, ds <\infty$;
  \item for $0<m<\beta$ and $t\in [0,T]$ we have $u(t)\in L^ m(\Omega;H_2^ {-2\alpha}(\CO))$.
\end{enumerate}
}
%
\end{theorem}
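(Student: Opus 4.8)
The idea is to reduce the equation with large jumps \eqref{SDE-bigjumps} to the equation without large jumps \eqref{eqnohnebj} handled in Theorem \ref{local_ex}, by conditioning on the randomness generating $\eta_L$ and then solving between consecutive jump times $T_i$ of the compound Poisson process. On each interval $[T_i,T_{i+1})$ the large-jump term contributes nothing new (its jumps occur only at the $T_j$), so there the equation is of the type \eqref{eqnohnebj} and Theorem \ref{local_ex} applies; at each jump time $T_i$ we add the instantaneous kick $\CT(\,\cdot-T_i)G_L(T_i^-,u(T_i^-),Y_i)$. The obstacle flagged in the introduction is that, because of the memory kernel $b$, the resolvent family $(S(t))_{t\ge 0}$ has no semigroup property, so one cannot restart the deterministic evolution at $T_i$ with a new initial condition — the past of $u$ on $[0,T_i]$ still influences $u$ on $[T_i,T]$ through $\CC$. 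So a naive gluing fails.

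\textbf{Step 1: a conditional formulation.} Work on the product space $\Omega=\Omega^S\times\Omega^L$. Fix $\omega^L\in\Omega^L$; this fixes the jump times $0<T_1<T_2<\cdots$ and jump locations $Y_1,Y_2,\dots$ (with $N(T)<\infty$ a.s.\ since $\nu_L$ is finite). We then seek a process $u$ on $(\Omega^S,\CF^S,\{\CF^S_t\},\PP^S)$ solving \eqref{varcons}. I would rewrite \eqref{varcons} as a fixed-point equation $u=\mZ u + \Phi$, where $\mZ$ is exactly the operator from the proof of Theorem \ref{local_ex} (built from $\mT,\mF=\mC\circ F,\mG=\mS\circ G$) and $\Phi(t):=\sum_{i=1}^{N(t)}\CT(t-T_i)G_L(T_i^-,u(T_i^-),Y_i)$ is the extra inhomogeneity. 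The key point is that $\mZ$ is a strict contraction on $\mX_\lambda=\CM^{q,q}_\lambda(0,T;H)$ for $\lambda$ large (Step I of Theorem \ref{local_ex}), and this is unaffected by the addition of a fixed forcing term. The difficulty is that $\Phi$ itself depends on $u$ (through $u(T_i^-)$), so I would instead argue \emph{iteratively over the jump intervals}, which is where the finiteness of $N(T)$ is essential.

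\textbf{Step 2: induction over jump intervals.} On $[0,T_1)$ the term $\Phi$ vanishes, so $u$ restricted to $[0,T_1)$ is the unique mild solution of \eqref{eqnohnebj} given by Theorem \ref{local_ex}; in particular $u\in\DD([0,T_1);H_{-\alpha}^A)$ and, using \eqref{addasszweites}, $u(t)\in L^q(\Omega;H_{-\alpha_I}^A)$ for $t<T_1$, and $u(T_1^-)\in H_{-\alpha_I}^A$ exists as a left limit. Now suppose $u$ has been constructed on $[0,T_i)$ with the required regularity. Define $v_i:=G_L(T_i^-,u(T_i^-),Y_i)$, which lies in $H_{-\alpha_I}^A$ by Assumption \ref{assbigjumps}. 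On $[T_i,T_{i+1})$ I would solve, again by a contraction argument on $\CM^{q,q}_\lambda$ over that subinterval, the equation
\begin{align*}
u(t)&=S(t)u_0+\sum_{j=1}^{i}\CT(t-T_j)v_j+\int_0^t S(t-s)F(s,u(s))\,ds\\
&\qquad+\int_0^t\int_Z S(t-s)G(s,u(s),z)\,\tilde\eta(dz,ds),\qquad t\in[T_i,T_{i+1}),
\end{align*}
where on the right-hand side the values $u(s)$ for $s<T_i$ are \emph{already known} from the previous steps (they enter $F$, $G$ and also, through $\mC$ and $\mS$, the contribution of the memory on $[0,T_i)$). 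Concretely, split each of the integrals $\int_0^t=\int_0^{T_i}+\int_{T_i}^t$; the first part is a known $\CF^S_{T_i}$-measurable process, continuous resp.\ \cadlag in $H_{-\alpha}^A$ on $[T_i,T]$ by the smoothing estimates of Lemma \ref{smoothing} (for the $F$-part, as in Step II of Theorem \ref{local_ex}) and Proposition \ref{prop:cadlag} (for the $G$-part); the second part is the unknown. Thus on $[T_i,T_{i+1})$ we face exactly an equation of the form "$u=\mZ_i u+\Psi_i$" with $\mZ_i$ the (strictly contractive, for $\lambda$ large) restriction of $\mZ$ to that interval and $\Psi_i$ a fixed, \cadlag, $H_{-\alpha}^A$-valued, $L^q(\Omega;H_{-\alpha_I}^A)$-valued forcing. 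Banach's fixed point theorem yields a unique solution there; Step II and Step III of Theorem \ref{local_ex} apply verbatim to give the \cadlag property in $H_{-\alpha}^A$ (the jump of $u$ at $T_i$ being exactly $v_i$) and $u(t)\in L^q(\Omega;H_{-\alpha_I}^A)$ for $t\in[T_i,T_{i+1})$, and $u(T_{i+1}^-)$ exists in $H_{-\alpha_I}^A$. Since $N(T)$ is a.s.\ finite, after finitely many steps we obtain $u$ on all of $[0,T]$, and patching the estimates gives $\PP(u\in\DD([0,T];H_{-\alpha}^A))=1$ and $u(t)\in L^q(\Omega;H_{-\alpha_I}^A)$.

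\textbf{Step 3: measurability, uniqueness, and removing the conditioning.} Two points remain. First, the construction was carried out for fixed $\omega^L$, so I must check joint measurability in $(\omega^S,\omega^L)$: this follows because each jump time $T_i$ and jump location $Y_i$ is measurable in $\omega^L$, the fixed-point solution on each interval depends measurably on the data (the Picard iterates are jointly measurable and converge), and there are a.s.\ finitely many intervals. This is where Assumption \ref{slowvarbigjumps} enters: to pass from "a.s.\ finitely many jumps on $[0,T]$" — which already holds since $\nu_L$ is finite — to genuine integrability-type control, and in particular to ensure that the sum $\sum_{j=1}^{N(t)}\CT(t-T_j)v_j$ is in $L^q(\Omega;H_{-\alpha_I}^A)$; the tail bound $\nu_L(\{|z|>x\})\le Cx^{-\beta}$ together with the linear growth $L_{G_L}(z)\le M(1+\|z\|_{Z_L})$ of Assumption \ref{assbigjumps} controls $\EE\|v_j\|_{H_{-\alpha_I}^A}^q$ after bounding $\|v_j\|$ by $\|G_L(T_j^-,0,Y_j)\|+L_{G_L}(Y_j)\|u(T_j^-)\|$ and using the moment bound on $u(T_j^-)$ from the previous step; one argues inductively, the number of jumps being Poisson with parameter $\sigma T$. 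Second, uniqueness: any mild solution of \eqref{SDE-bigjumps} must, restricted to $[0,T_1)$, solve \eqref{eqnohnebj} hence agree with ours by Theorem \ref{local_ex}; then inductively on $[T_i,T_{i+1})$ it solves the same "$u=\mZ_i u+\Psi_i$" equation with the same (now-determined) $\Psi_i$, hence agrees there too. Finally, the whole argument used nothing about $\eta_L$ beyond the representation $\eta_L=\sum_j\delta_{(Y_j,T_j)}$ with finite $N(T)$, so it applies in particular when the $(T_j,Y_j)$ are the jumps of the \levy process associated with $\nu_L$, as claimed in the introduction.

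\textbf{Main obstacle.} The genuinely delicate point is Step 2: because $S$ is not a semigroup, restarting at $T_i$ is \emph{not} the same as solving a fresh Cauchy problem with initial value $u(T_i)$ — the memory integrals $\int_0^{T_i}S(t-s)(\cdots)\,ds$ keep acting on $[T_i,T]$. The resolution is to carry the entire past of $u$ as a fixed inhomogeneity $\Psi_i$ into the contraction on $[T_i,T_{i+1})$, and to verify that $\Psi_i$ has the right \cadlag and $L^q$-regularity using Lemma \ref{smoothing} and Proposition \ref{prop:cadlag} exactly as in Theorem \ref{local_ex}; this is what makes the "generalized gluing" work despite the absence of the Markov property.
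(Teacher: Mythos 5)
Your proposal is essentially correct but follows a genuinely different route from the paper. You glue interval by interval: on $[T_i,T_{i+1})$ you freeze the already-constructed past $u|_{[0,T_i)}$ inside the memory integrals as a known inhomogeneity $\Psi_i$ and run a fresh contraction for the increment, iterating over the a.s.\ finitely many jumps. The paper instead performs a \emph{single, simultaneous} fixed point on all of $[0,T]$: it introduces the product space $\mX_\lambda\times\mL_\lambda(T_1,\ldots,T_N)$, where the second factor carries the candidate left-limit values $x_i\approx u(T_i^-)$ with the weighted norm $(\sum_i e^{-\lambda T_i}\EE|x_i|_{H_{-\alpha_I}^A}^q)^{1/q}$, and shows the map $\Theta(\xi,(x_i))=(\mathfrak{Z}_0(\xi,(x_i)),(\mathfrak{Z}_0(\xi,(x_i))(T_j^-))_j)$ is a strict contraction for suitable $\lambda$ and a balancing weight $K$; randomness of $(T_i,Y_i)$ is then absorbed by running this fixed point in $L^\infty(A_n^N;H(\lambda,K))$ over events $A_n^N$ on which $N(T)=N$, $|Y_i|\le n$ and the jump times are $T/n$-separated, so that the contraction constant is uniform and joint measurability is automatic. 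Your route buys conceptual transparency and avoids the auxiliary space $\mL_\lambda$, at the price of having to justify, for each $i$, the \cadlag/$L^q$ regularity of $\Psi_i$ and the measurable dependence of each interval's fixed point on $(\omega^S,\omega^L)$ — the points you correctly flag, and which the paper's uniform-contraction-on-$A_n^N$ device is designed to dispose of in one stroke.

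One substantive correction: your account of where Assumption \ref{slowvarbigjumps} enters does not match either the paper or what is actually needed. The paper uses the tail bound $\nu_L(|z|>x)\le Cx^{-\beta}$ only to prove $\PP^L(B_n)\to 1$, i.e.\ to show that the good events (at most $\lfloor n^\gamma\rfloor$ jumps, all of size $\le n$, mutually $T/n$-separated) exhaust $\Omega^L$; the factor $(1-Cn^{\gamma-\beta})$ with $\gamma<\beta$ is exactly where it is consumed. Your suggested use — to get $\sum_j\CT(t-T_j)v_j\in L^q(\Omega;H_{-\alpha_I}^A)$ over the \emph{full} product space — would in fact require $\int_{Z_L}\|z\|^q\,\nu_L(dz)<\infty$, i.e.\ $\beta>q$, which is not assumed and would fail in general; fortunately the theorem only asserts that $u(t)$ is an $H_{-\alpha_I}^A$-valued random variable (all moment statements are conditional on $\omega^L$), so this claim is both unjustified and unnecessary and should simply be dropped from your Step 3.
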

\begin{proof}
{\rm %

As indicated above, the proof proceeds in 2 steps.\medskip \\
\noindent {\bf Step I.}
In the first step, we will show that for any deterministic $N<\infty$, $\{ T_1,\ldots,T_N\}\subset [0,T]$, $T_i<T_{i+1}$, $i=1,\ldots,N-1$, and $\{Y_1,\ldots , Y_N\}\subset Z_L$ there exists a process $u$ over $\mathfrak{A}^S$ such that $u$ solves $\PP^S$--a.s. the integral equation
\begin{align}\label{largejumps}
\\
\nonumber
\lqq{
u(t)=S(t)u_0+\int_{0}^{t}S(t-s)F(s,u(s))\,ds}
&&
\\
\nonumber
&&{} +\int_{0}^{t}\int_Z S(t-s)G(s,u(s),z)\,\tilde \eta (dz,ds)
\\
\nonumber
&&{} +
\sum_{i=1}^ N 1_{[T_i,T]} (t)\CT(t-T_i)G_L(T^ -_i,u(T ^-_i),Y_i). 
\end{align}
%

\medskip
In this step we will use the notation $\mathbb{E}$ for the expectation over $\Omega^S$ to shorten notation.
Using the notation introduced in Step I of the proof of Theorem \ref{local_ex} we
define the integral operator
$\mathfrak{Z}_0$  by
\DEQSZ\label{mZlarge}
\\
\nonumber
\lqq{ (\mathfrak{Z}_0(\xi;(x_i)_{i=1}^ N)(t):= (\mT u_0)(t)+
\lk(\mathfrak{F}\xi\rk) (t)}
\\ &&{}\nonumber+
\lk( \mathfrak{G}\xi\rk)(t) + \sum_{i=1}^ N 1_{[T_i,T]}(t) (\mT_{T_i}  G_L(T^-_i,x_i,Y_i))(t)
, \quad t\in[0,T],\, \xi\in\mX_\lambda.
  \EEQSZ
Here
$\mT_{T_i}:H_{-\alpha_I}^ A \to \mathfrak{X}_\lambda$
is defined by
$$(\mT_{T_i}  x)(t) := \CT(t-T_i) 1_{[T_i,T]}(t) x,
\quad  x\in  H_{-\alpha_I}^ A.
$$

Next, we define a space and an operator to apply a fixed point argument. In order to do this,
 let
\DEQS
\mathfrak{L}_\lambda(T_1,\ldots, T_N):=\lk\{
(x_i)_{i=1}^ N : x_i\in  L^ q(\mathfrak{A}^ S;H_{-\alpha_I}^A),\rk.\hspace{4cm}
\\
\hspace{1cm}\lk. \,\mbox{ for all $i=1,\ldots, N$ the random variable  } x_i\,\mbox{ is $\CF_{T_i^ -}^S$ measurable }\rk\},
\EEQS
with norm
$$\lk|(x_i)_{i=1} ^N \rk|_{\mL_\lambda}:= \lk( \sum_{i=1} ^N e ^{-\lambda T_i}  \, \EE |x_i|_{H_{-\alpha_I} ^A} ^q
\rk) ^\frac 1q .
$$
Secondly, let
$$
X(\lambda,K):= \mathfrak{X}_\lambda\times \mathfrak{L}_\lambda(T_1,\ldots, T_n),
$$
with norm
$$
\lk|(\xi, (x_i)_{i=1}^ N) \rk|_{X(\lambda,K)} := K\, |\xi|_{\mX_\lambda} + \lk|(x_i)_{i=1} ^N \rk|_{\mL_\lambda},\quad  (\xi, (x_i)_{i=1}^ N )\in X(\lambda,K).
$$

\noindent
Let us define a operator $\Theta$ acting on $X(\lambda,K)$
 by putting
$$\Theta(\xi;(x_i)_{i=1}^ N) = (\mathfrak{Z}_0(\xi,(x_i)_{i=1}^ N) 
; (\mathfrak{Z}_0(\xi,  (x_i)_{i=1}^ N) (T_j ^-))_{j=1}^N),
$$
for $(\xi,(x_i)_{i=1}^ N) \in X(\lambda,K)$. 

\medskip

We will show in the  first part, that for any $K>0$ and $\lambda>0$ the mapping $\Theta$ maps $X(\lambda,K)$ into itself, and, in the second part,  that there exist numbers
$K,\lambda>0$ such that
$\Theta$ is a contraction on $X(\lambda,K)$.

\medskip
First note that there is a modification of $t\to \mathfrak{Z}_0(\xi,  (x_i)_{i=1}^ N) (t)$ that is \cadlag in $H_{-\alpha}$ (in particular, we may take $\alpha=\alpha_I$)  when $\xi \in \mathfrak{X}_\lambda$ and $(x_i)_{i=1}^N\in \mathfrak{L}_\lambda$ as Step II of Theorem \ref{local_ex} together with the assumption on $G_L$ show. Therefore, the one sided limits $\mathfrak{Z}_0(\xi,  (x_i)_{i=1}^ N) (T_j ^-)$ exist in $H_{-\alpha_I}$.

Similarly as before one can show that
first $\mathfrak{Z}_0(\cdot,(x_i)_{i=1}^N)$ maps $\mathfrak{X}_\lambda$ into $\mathfrak{X}_\lambda$ for given $(x_i)_{i=1}^N\in \mathfrak{L}_\lambda$.
First, note, by estimate \eqref{operatorCT} we have $$|\mT u_0|_{\mathfrak{X}_\lambda}\le C\, \lambda ^ {\frac 1q -\alpha_I\rho}|u_0|_{H_{-\alpha_I}^A}.$$
Next,
Corollary \ref{det_st}, Proposition \ref{stconv} and the  Lipschitz continuity of $F$ and $G$ imply that the operators
$ \mathfrak{F}$ and $\mathfrak{G}$  map $\mX_\lambda$ into itself.
Finally,  from estimate \eqref{operatorCT} and Assumption \ref{assbigjumps},
it follows that for each $i=1,\ldots,N$  the terms  $\mT_{T_i}G_L(T_i^-,x_i,Y_i) $ belong to $\mX_\lambda$.
Indeed, one can show by similar calculation as done for estimate \eqref{operatorCT}, that for $(y_i)_{i=1}^ N$ the following estimate holds:
\DEQS
\lqq{ \lk\| \mT _{T_i} y_i\rk\|^ q_{\mathfrak{X}_\lambda } \le \int_{T_i}^T e^ {-\lambda t}|\CT(t-T_i) y_i|_H^ q \, dt }
&&
\\&\le & C e^ {-\lambda T_i}
 \int_{T_i}^T e^ {-\lambda (t-T_i)} (t-T_i) ^ {-\rho\alpha_Iq} \, dt\, | y_i|_{H_{-\alpha_I}^ A}^q.
\EEQS
Therefore,
\DEQS
\EE  \lk\| \mT _{T_i} y_i \rk\|_{\mathfrak{X}_\lambda} ^ q &\le &
 C\Gamma( 1-\rho\alpha_Iq)\, \lambda ^ {\rho  \alpha_I q-1}\, e^ {-\lambda T_i} \EE |y_i|^ q _{H_{-\alpha_I}^ A}
. 
\EEQS
Taking the sum over $i=1,\ldots,N$
we get
\DEQS
\sum_{i=1}^ N\EE  \lk\| \mT _{T_i} y_i \rk\|_{\mathfrak{X}_\lambda} ^ q &\le &
 C\Gamma( 1-\rho\alpha_Iq)\, \lambda ^ {\rho  \alpha_I q-1}\,\sum_{i=1}^ N e ^ {-\lambda T_i} \EE |y_i|^ q _{H_{-\alpha_I}^ A} .
\EEQS
Putting $y_i=G_L(T_i^-,x_i,Y_i)$ and using Assumption  \ref{assbigjumps} we get
\DEQS
&&\left\|\sum_{i=1}^ N 1_{[T_i,T]}(\cdot) (\mT_{T_i}  G_L(T^-_i,x_i,Y_i))(\cdot)\right\|_{\mathfrak{X}_\lambda}^q\le C\sum_{i=1}^ N\EE  \lk\| \mT _{T_i} G_L(T_i^-,x_i,Y_i) \rk\|_{\mathfrak{X}_\lambda} ^ q
\\
& &\le
 C\Gamma( 1-\rho\alpha_Iq)\, \lambda ^ {\rho  \alpha_I q-1}\,\sum_{i=1}^ N e ^ {-\lambda T_i}(1+ L_{G_L}(Y_i))^q\EE |x_i|^ q _{H_{-\alpha_I}^ A},
\EEQS
%
%
where $C$ depends on $N$. Thus,
we have shown that
$$\lk| \mathfrak{Z}_0 (\xi,(x_i)_{i=1}^ N) \rk|_{\mX_\lambda}\le
C\lk( |u_0|_{H_{-\alpha_I}^ A} + |\xi|_{\mX_\lambda}+ \lk|(\xi,(x_i)_{i=1}^ N\rk| _{\mL_\lambda}  )\rk).
$$

\medskip

It remains to show that
$$
\sum_{j=1}^N
e ^{-\lambda T_j} \EE |  (\mathfrak{Z}_0(\xi, (x_i)_{i=1}^ N)(T_j ^-)|^ q _{H_{-\alpha_I}^A}<\infty.
$$
But this follows by estimate \eqref{assertion1}  in Step III of the  proof of Theorem \ref{local_ex}.

\medskip

Next, we show that there exist numbers  $\lambda>0$ and $K>0$  such that $\Theta$ is a contraction on $H(\lambda,K)$.
First, we will analyse $\mZ_0(\xi,(x_i)_{i=1}^N)$.
Let $\xi_1,\xi_2\in\mX_\lambda$ and $(x_i^1)_{i=1}^N, (x_i^2)_{i=1}^N\in \CL_\lambda$.
Taking the difference $ \mathfrak{Z}_0(\xi _1,(x_i^1)_{i=1}^N) -\mathfrak{Z}_0(\xi_2,(x_i^2)_{i=1}^N)$ we see that $\mT u_0$ will disappear.
Similarly as in Step I of the proof of Theorem \ref{local_ex} we know from %
Corollary  \ref{det_st} and the  Lipschitz continuity of $F$  that the operator
\DEQSZ\label{operatorflarge}
 \mathfrak{F} :\mX_\lambda \longrightarrow \mX_\lambda.
\EEQSZ
is Lipschitz continuous  with Lipschitz constant $L_F\, C(\alpha_F,\lambda)$ with
$$\lim_{\lambda\to \infty }  C(\alpha_F,\lambda)=0.
$$
Again, Proposition \ref{stconv} and the  Lipschitz continuity of $G$ implies that the operator
\DEQSZ\label{operatorglarge}
 \mathfrak{G} :\mX_\lambda \longrightarrow \mX_\lambda.
\EEQSZ
is Lipschitz continuous as well with Lipschitz constant $L_G\, C(\alpha_G,\lambda)$ with
$\lim_{\lambda\to 0}  C(\alpha_G,\lambda)=0$.
It remains to consider the mappings
$$
 t\mapsto 1_{[T_i,T]}(t) \mT_{T_i}  G_L(T_i^{-},x_i,Y_i)(t)
, \quad t\in[0,T],\, \xi \in\mX_\lambda, \, i=1,  \ldots ,N.
$$
Similar calculation as for \eqref{operatorCT} gives for $(y_i)_{i=1}^ N$
\DEQS
\lqq{ \lk\| \mT _{T_i} y_i\rk\|^ q_{\mX_\lambda  } \le \int_{T_i}^T e^ {-\lambda t}|\CT(t-T_i) y_i|_H^ q \, dt }
&&
\\&\le & e^ {-\lambda T_i}
 \int_{T_i}^T e^ {-\lambda (t-T_i)} (t-T_i) ^ {-\rho\alpha_Iq} \, dt\, | y_i|_{H_{-\alpha_I}^ A}^q
\EEQS
and therefore,
\DEQSZ\label{operatorCTlarge}
\lk\| \mT _{T_i} y_i\rk\|_{\mX_\lambda  }^q &\le &
 \Gamma( 1-\rho\alpha_Iq) \, \lambda ^ {\rho  \alpha_I q-1}\, e ^{-\lambda T_i} |y_i |^q_{H_{-\alpha_I}^ A}.
\EEQSZ
Thus, setting $y_i= G_L(T_i^-,x_i,Y_i)$, we get
\DEQS\lqq{
 \|  1_{[T_i,T]}(\cdot) \mT_{T_i}  G_L(T_i^-,x^ 1_i,Y_i)(\cdot)- 1_{[T_i,T]}(\cdot) \mT_{T_i}  G_L(T_i^-,x_i^ 2,Y_i)(\cdot)\|^q_{\mathfrak{X}_\lambda}}
\\
& \le&   C\, \lambda ^ {\rho\alpha_Iq-1 } \, e ^{-\lambda T_i}\EE |G_L(T_i^-,x^ 1_i,Y_i)-G_L(T_i^-,x_i^ 2,Y_i)|^q_{H_{-\alpha_I}^A} .
\EEQS
The Lipschitz property of $G_L$ in the second variable gives
\DEQS\lqq{
\|  1_{[T_i,T]}(\cdot) \mT_{T_i}  G_L(T_i^-,x^ 1_i,Y_i)(\cdot)- 1_{[T_i,T]}(\cdot) \mT_{T_i}  G_L(T_i^-,x_i^2,Y_i)(\cdot)\|^q_{\mathfrak{X}_\lambda}}
\\
& \le&     C\, \lambda ^ {\rho\alpha_Iq-1 } \,e ^{-\lambda T_i} L_{G_L}(Y_i)^ q\EE |x_i^1-x_i^2|_{H_{-\alpha_I}^A}^ q .
\EEQS
Taking the sum over $i=1,\ldots,N$ gives
\DEQS
&&\left\|\sum_{i=1} ^N   1_{[T_i,T]}(\cdot) \mT_{T_i}  G_L(T_i^-,x_i^ 1,Y_i)- \sum_{i=1} ^N1_{[T_i,T]}(\cdot) \mT_{T_i}  G_L(T_i^-,x_i^ 2,Y_i)\right\|^q_{\mathfrak{X}_\lambda}\\
&&\le C
\sum_{i=1} ^N  \|  1_{[T_i,T]}(\cdot) \mT_{T_i}  G_L(T_i^-,x_i^ 1,Y_i)- 1_{[T_i,T]}(\cdot) \mT_{T_i}  G_L(T_i^-,x_i^ 2,Y_i)\|^q_{\mathfrak{X}_\lambda}
\\
&&\le  \max(L_{G_L}(Y_1)^q,\ldots, L_{G_L}(Y_N)^q)    C\, \lambda ^ {\rho\alpha_I q-1 } \, |(x ^1_i)_{i=1} ^N -(x ^2_i)_{i=1} ^N |_{\mL_\lambda }^ q ,
\EEQS
where $C$ depends on $N$. Therefore, we have shown that
\DEQS
\lqq{ \lk| \mathfrak{Z}_0 (\xi _1,(x_i^1)_{i=1}^N) -\mathfrak{Z}_0 (\xi_2,(x_i^2)_{i=1}^N)\rk|_{\mX_\lambda} }&&
\\
&\le &
C_0(\lambda)\,\lk(  |\xi _1-\xi_2|_{\mX_\lambda} + |(x_i^1)_{i=1}^N-(x_i^2)_{i=1}^N)|_{\mathfrak{L}_\lambda}\rk) ,
\EEQS
with $C_0(\lambda)\to0$ as $\lambda\to\infty$.
\medskip
It remains to consider
\DEQS
 |(\mathfrak{Z}_0(\xi_1, (x^ 1_i)_{i=1}^ N)(T ^-_j) )_{j=1}^N) -(\mathfrak{Z}_0(\xi_2, (x^ 2 _i)_{i=1}^ N)(T^-_j))_{j=1}^N) |_{\mathfrak{L}_\lambda}
 .\EEQS

Again, we begin with mimicking the calculations done in Step III of the proof of Theorem \ref{local_ex}. In particular,
\DEQS
\lqq{ e ^ {-\lambda T_j } \EE |\mZ_0(\xi ^1, (x ^1 _i)_{i=1} ^N) ({T_j ^-})-\mZ_0(\xi ^2 , (x ^2 _i)_{i=1} ^N) ({T_j ^-})|_{H_{-\alpha_I} ^A} ^q}
&&
\\ &\le & C\lk(  e ^ {-\lambda T_j }\EE \lk|\int_0 ^{T_j ^-} \CT({T_j ^-}-s)\lk(  F(s,\xi  ^1(s))-F(s,\xi ^2  (s) )\rk)\, ds\rk|_{H_{-\alpha_I} ^A}^q\rk.
\\
&&{}\lk. +  e ^ {-\lambda T_j }\EE \Big| \int_0 ^{T_j ^-} \int_Z \CT({T_j ^-}-s)\rk.
\\
&&\lk.\lk(  G(s,\xi ^1  (s),z)- G(s,\xi  ^2 (s),z)\rk) \tilde \eta (dz,ds)\Big|_{H_{-\alpha_I} ^A}^q\rk.
\\
&&{}\lk. + e ^ {-\lambda T_j }\sum_{i=1} ^N 1_{[T_i,T]}({T_j ^-})\,  \rk.
\\
&&\lk.\EE \lk|\CT({T_j ^-}-T_i)\lk( G_L(T_i,x ^1_i,Y_i)- G_L(T_i,x ^2_i,Y_i)\rk) \rk|_{H_{-\alpha_I} ^A}^q\rk).
\EEQS
We are going to estimete the two summands separately. First, we get
 for a fixed $T_j$ that
\DEQS
\lqq{ e ^ {-\lambda T_j } \EE \Big|\int_0 ^{T_j ^-} \CT({T_j ^-}-s)\lk( F(s,\xi ^1 (s))-F(s,\xi ^2 (s))\rk) \, ds\Big|_{H_{-\alpha_I} ^A}^q
} &&
\\
& \le & \EE \Bigg( \int_0 ^{T_j ^-}e ^ {-\lambda (T_j-s)/q } ({T_j ^-}-s)^ {-((\alpha_F-\alpha_I)\wedge 0) \rho}e ^ {-\lambda s/q }
\\
&&\lk|  F(s,\xi  ^1 (s))-F(s,\xi ^2
(s))\rk|_{H_{-\alpha_F} ^A}\, ds \Bigg)^q
\\
&\le &
C(\lambda)\,  \EE \int_0 ^{T_j ^-}  e ^ {-\lambda s } \lk|  \xi ^1 (s)- \xi ^2 (s)\rk|^ q_{H}\, ds
\le C(\lambda) |\xi_1-\xi_2|_{\mX_\lambda} ^q,
\EEQS
with $C(\lambda)\to 0$ as $\lambda\to\infty$.
Similarly we get for the second summand using that $\alpha_I\ge \alpha_G$,
\DEQS
\lqq{ e ^ {-\lambda T_j }  \EE \lk| \int_0 ^{T_j ^-} \int_Z \CT({T_j ^-}-s) \lk( G(s,\xi  ^1(s);z)-G(s,\xi  ^2 (s);z)\rk) \tilde \eta (dz,ds)\rk|_{H_{-\alpha_I} ^A } ^q}
 &&
\\
 &\le&  e ^ {-\lambda T_j } \Bigg[
 \\
 &&{}
 \EE \int_0 ^{T_j ^-} \int_Z |\CT({T_j ^-}-s)\lk(  G(s,\xi ^1 (s);z)-G(s,\xi ^2  (s);z)\rk)| _{H_{-\alpha_I} ^A} ^q \, \nu(dz)\, ds\\
&+&  \EE\left( \int_0 ^{T_j ^-} \int_Z |\CT({T_j ^-}-s)\lk(  G(s,\xi ^1 (s);z)-G(s,\xi ^2  (s);z)\rk)| _{H_{-\alpha_I} ^A} ^p \, \nu(dz)\, ds\right)^{\frac{q}{p}}\Bigg]
 \EEQS
 \DEQS&\le & C(1+C(T))
 \\
 &&{}\times
  \EE \int_0 ^{T_j ^-}  e ^ {-\lambda (T_j-s) }  e ^ {-\lambda s } |\xi  ^1(s)-\xi  ^2(s)| _{H} ^q \, ds \le  C |\xi ^1 -\xi ^2  |_{\mathfrak{X}_\lambda}^q .
\EEQS
Finally, using the Lipschitz property of $G_L$, we have
\DEQS
\lqq{  e^{-\lambda T_j}  \EE\Big| \sum_{i=1} ^N   1_{[T_i,T]}(T_j ^-) \mT_{T_i}  G_L(T_i^-,x ^1_i,Y_i)(T_j^-)
}
&&
\\
&&{} - 1_{[T_i,T]}(T_j ^-) \mT_{T_i}  G_L(T_i^-,x ^2_i,Y_i)(T_j^-)\Big|_{H_{-\alpha_I} ^A} ^q
\\
&\le & C   \EE\sum_{i=j+1}^ N e ^{-\lambda (T_j-T_i) }e ^{-\lambda T_i}\lk|  \mT_{T_i} \lk(  G_L(T_i^-,x ^1_i,Y_i)  - G_L(T_i^-,x ^2_i,Y_i)\rk)(T_j^-)\rk|_{H_{-\alpha_I} ^A} ^q\\
 &\le &  C e ^{-\lambda \min_{i=j+1,\ldots,N} (T_j-T_i) }
       \EE\sum_{i=j+1}^ N e ^{-\lambda T_i}L_{G_L}(Y_i)^q\lk|   x ^1_i-x ^2_i \rk|_{H_{-\alpha_I} ^A} ^q,
\EEQS
where $C$ depends on $N$.

Collecting the estimates and summing up over $j=1,\ldots,N$ we get
\DEQS
\lqq{ |(\mathfrak{Z}_0(\xi_1, (x^ 1_i)_{i=1}^ N)(T ^-_j) )_{j=1}^N) -(\mathfrak{Z}_0(\xi_2, (x^ 2 _i)_{i=1}^ N)(T^-_j))_{j=1}^N) |_{\mathfrak{L}_\lambda }^q}
&&\\
&= &  \sum_{j=1}  ^N
\EE e  ^{-\lambda T_j}\lk| \mathfrak{Z}_0(\xi_1, (x^ 1_i)_{i=1}^ N)(T ^-_j) -\mathfrak{Z}_0(\xi_2, (x^ 2 _i)_{i=1}^ N)(T^-_j) \rk|_{H_{-\alpha_I} ^A} ^q
\\
&\le & C(\lambda,N,T )\left( |\xi_1-\xi_2|_{\mX_\lambda} ^q\right.
\\&&{}
\left.+ \sum_{j=1}  ^N e ^{-\lambda \min_{i=j+1,\ldots,N} (T_j-T_i) }
       \EE\sum_{i=j+1}^ N e ^{-\lambda T_i}L_{G_L}(Y_i)^q\lk|   x ^1_i-x ^2_i \rk|_{H_{-\alpha_I} ^A} ^q.\right)\\
&&\le C(\lambda,N,T ) \left( |\xi_1-\xi_2|_{\mX_\lambda} ^q\right.\\
&&\left.{} +N e ^{-\lambda \min_{i,j=1,\ldots,N\atop j\not = i } (T_j-T_i) }
\max(L_{G_L}(Y_1)^q,\ldots,L_{G_L}(Y_N)^q)\,
\rk.\\&&\lk.       \EE\sum_{i=1}^ N e ^{-\lambda T_i}\lk|   x ^1_i-x ^2_i \rk|_{H_{-\alpha_I} ^A} ^q\right)
\EEQS
Where $1\le C(\lambda,T,N)\le C$ for all $\lambda>1$.
In summary, we obtain
\DEQS
\lqq{\lk| \Theta(\xi_1,(x_i^ 1 )_{i=1}^ N)- \Theta(\xi_2,(x_i^ 2 )_{i=1}^ N)\rk|_{H(\lambda,K)}
} &&
\\
&\le & \lk[ C_0(\lambda) +\frac {C(\lambda,T,N)^{\frac{1}{q}}} {K} \rk]
 K\, |\xi_1-\xi_2|_{\mX_\lambda}
\\
&&{}+ \left[ C_0(\lambda) K+  C(\lambda,T,N)^{\frac{1}{q}} N^{\frac{1}{q}} e ^{-\frac{\lambda}{q} \min_{i,j=1,\ldots,N\atop j\not = i } (T_j-T_i) }\right.\\
&&\left.{}\max(L_{G_L}(Y_1),\ldots,L_{G_L}(Y_N))\right]\quad\times|(x_i^ 1 )_{i=1}^ N-
(x_i^ 2 )_{i=1}^ N|_{\mL_\lambda}.
\EEQS
If $K$ is chosen such that $\frac {C(\lambda,T,N)^{\frac{1}{q}}} {K}<\frac 12$ for all $\lambda>1$ and then
$\lambda>1$ is chosen large enough so that $C_0(\lambda)\le \frac 12$, and
\begin{multline*}
C_0(\lambda) K+  C(\lambda,T,N)^{\frac{1}{q}} N^{\frac{1}{q}} \\e ^{-\frac{\lambda}{q} \min_{i,j=1,\ldots,N\atop j\not = i } (T_j-T_i) }
\max(L_{G_L}(Y_1),\ldots,L_{G_L}(Y_N)) < 1,
 \end{multline*}
 then there exists a number $0<k<1$ such that
\begin{equation}\label{eq:contr}
|\Theta(\xi_1, (x^ 1_i)_{i=1}^ N) -\Theta (\xi_2, (x^ 2 _i)_{i=1}^ N )|_{H(\lambda,K)}
\le k |(\xi_1-\xi_2,(x_i ^1 -x_i ^2 )_{i=1} ^N)|_{H(K,\lambda)}.
\end{equation}
Thus, we have proven that there exist numbers $\lambda,K>0$ such that  $\Theta$ is a strict contraction on $H(\lambda,K)$.
Applying Banach's fixed point theorem, it follows that there exists a process $\xi^\ast \in \mathfrak{X}_\lambda$ with a \cadlag modification in $H_{-\alpha}$ such that
\begin{align}
\lqq{
\xi^\ast (t)=S(t)u_0+\int_{0}^{t}S(t-s)F(s,\xi^\ast (s))\,ds}
&&
\\
\nonumber
&&{} +\int_{0}^{t}\int_Z S(t-s)G(s,\xi^\ast (s),z)\,\tilde \eta (dz,ds)
\\
\nonumber
&&{} +
\sum_{i=1}^ N 1_{[T_i,T]} (t)\CT(t-T_i)G_L(T^ -_i,\xi^\ast (T ^-_i),Y_i). 
\end{align}
%
%
%


\noindent {\bf Step II.}
Fix $N,n\in\NN$ and put
\begin{multline*}
A_n ^N :=
 \left\{ \omega^L\in \Omega^L: N(T)=N; |Y_i|_{Z_L}\le n,~ i=1,...,N;\right.\\
 \left.\min_{  1\le i,j\le N\atop i\not=j}|T_i-T_j| \ge \frac Tn\right\} \in\CF^L,
\end{multline*}
where $\{T_j:j=1\le j\le N\}$ and $\{Y_j:1\le j\le N\}$ are defined in \eqref{stoppingtimes} and \eqref{eq:lp}.
As the contraction constant $k$ in \eqref{eq:contr} only depends on $N$ and $n$ and $L_{G_L}(z)\leq M(1+|z|_{Z_L})$, Step I also show that, there is $0<k<1$, such that
$$
\sup_{\omega^L\in A_n ^N}|\Theta(\xi_1, (x^ 1_i)_{i=1}^ N) -\Theta (\xi_2, (x^ 2 _i)_{i=1}^ N )|_{H(\lambda,K)}
\le k |(\xi_1-\xi_2,(x_i ^1 -x_i ^2 )_{i=1} ^N)|_{H(K,\lambda)}.
$$
where in the definition of $\Theta$ we allow $\{T_j:j=1\le j\le N\}$ and $\{Y_j:1\le j\le N\}$  be random variables from \eqref{stoppingtimes} and \eqref{eq:lp}. Therefore, Banach's fixed point theorem on $L^{\infty}(A_n ^N; H(\lambda,K))$ shows that there is a unique progressively measurable process $\xi_n^N$ on $\Omega^S\times A_n^N $  with a \cadlag modification in $H_{-\alpha}$  such that
for $\mathbb{P}$ almost surely on $\Omega^S\times A_n^N $, the process $\xi_n^N$ satisfies \eqref{largejumps}. We extend $\xi_n^N$ by setting it $0$ on  $\Omega^S\times \Omega^L\setminus \Omega^S\times A_n^N $.
Let $\gamma$ be a number such that $0<\gamma< \min(\frac 12 ,\beta)$,
and put   $g(n)=\lfloor n^ \gamma \rfloor$. Let $B_n:= \cup_{N=1 }^ {g(n)}  A_n ^N$ with $B_0:=\emptyset$.  Note that for fixed $n$, the sets $\{ A_n ^N,~N=1,...,g(n)\}$ are disjoint and that $B_n\subset B_{n+1}$.
Define
\begin{equation*}
u(t):=\sum_{n=1}^{\infty}1_{B_n\setminus B_{n-1}}\sum_{N=1}^{g(n)} \xi_n^N(t)
\end{equation*}
Then $u$ satisfies \eqref{largejumps} for $\mathbb{P}$ almost all $\omega \in \Omega^S\times (\cup_{n=1}^{\infty} B_n) $.
It remains to show that
 $\lim_{n\to\infty} \PP^L( B_n)=1$.

First, note that given $N(T)=N$, the times $\{T_i:i=1,\ldots, N\}$ are uniformly distributed on the interval $[0,T]$
(see  \cite[Proposition 2.9]{tankov})
and are independent of  $\{ Y_j:j\ge 0\}$.
Fix $n\in\NN$. To give an lower estimate of $\PP^L(B_n)$, observe that
$$
\PP^L\lk( |T_1-T_2|\ge \frac T n\mid N(T)=N \rk) \ge \lk(1-\frac 2n\rk).
$$
Throwing $T_3$ into the interval $[0,T]$, by the Bayes formula, we get
\DEQS
\lqq{ \PP^L\lk( \min_{i,j=1,2,3\atop i\not = j}|T_i-T_j|\ge \frac T n \mid N(T)=N \rk)}
&& \\
& =&\PP^L\lk( |T_1-T_2|\ge \frac T n\mid N(T)=N \rk)
\\&&{}\times \PP^L\lk( \min_{i=1,2} |T_3-T_i|\ge\frac Tn\,\Big|\, |T_1-T_2|\ge \frac T n \mbox{ and }  N(T)=N \rk)
\\
&\ge & (1-\frac 2n) (1-\frac 4n).
\EEQS
Iterating, we get
\DEQS
 \PP^L\lk( \min_{i,j=1,\ldots,N\atop i\not = j}|T_i-T_j|\ge \frac T n\mid N(T)=N \rk)
\ge \prod_{j=1}^{N} (1-\frac {2j}n).
\EEQS
Since
$$-\log\lk( \prod_{j=1}^{N} (1-\frac {2j}n)\rk)= -\sum_{j=1} ^N \log\lk( 1-\frac {2j}n\rk)\le  \sum_{j=1} ^N \frac {2j}n \simeq {N(N-1)\over n}.
$$
there exists a constant $c>0$ such that for $n\ge \sqrt{N}$
\begin{equation}\label{eq:les}
 \PP^L\lk( \min_{i,j=1,\ldots,N\atop i\not = j}|T_i-T_j|\ge \frac T n\mid N(T)=N \rk)
\ge e ^{-c{N(N-1)\over 2n}}.
\end{equation}


%
\medskip
As noted before, the sets $\{ A_n ^N,~N=1,...,g(n)\}$ are disjoint for fixed $n$ and
therefore,
\begin{align*}
&\PP^L\lk( B_n\rk) = \sum_{N=1} ^{g(n)} \\
&  \PP^L\lk(  \lk\{ \omega^L\in \Omega^L : N(T)=N; |Y_i|_{Z_L}\le n,~ i=1,...,N;\min_{  1\le i,j\le N\atop i\not=j} |T_i-T_j|\ge \frac Tn\rk\}\rk)\\
&=\sum_{N=1}  ^{g(n)}  \PP^L\lk(\lk\{ \omega^L\in \Omega^L :  N(T)=N\rk\}\rk)\\
&   \PP^L\lk(  \lk\{ \omega^L\in \Omega^L :  |Y_i|_{Z_L}\le n,~ i=1,...,N;\min_{  1\le i,j\le N\atop i\not=j} |T_i-T_j| \ge \frac Tn\rk\}\Big| N(T)=N\rk)
\end{align*}
Under the condition  $N=N(T)$ the random variables $T_i$, $i=1,\ldots, N$ and $Y_i$, $i=1,\ldots, N$, are mutually independent. In addition, $N(T)$ is a Poisson distributed random variable with parameter $\sigma T=\nu_L(Z_L)\,T$ and $N(T)$ is independent of $Y_i$, $i=1,\ldots, N$.
Therefore, using Assumption  \ref{slowvarbigjumps}, estimate \eqref{eq:les} together with Bernoulli's inequality, we get for $n$ large enough that
\DEQS
\PP^L\lk( B_n\rk)
&\ge &  \sum_{N=1} ^{g(n)}  \PP^L\lk( \lk\{ \omega^L\in \Omega^L :  N(T)=N\rk\}\rk) e ^{-c{N(N-1)\over 2n}}  \lk(1 -C n ^{-\beta } N\rk)
\\
&\ge &  \sum_{N=1} ^{g(n)}    e ^{-\sigma T} {(\sigma T) ^N \over N!
}
  e ^{-c{N(N-1)\over 2n}}  \lk(1 - Cn ^{\gamma-\beta } \rk)
\\
&\ge &  \sum_{N=1} ^{g(n)}    e ^{-\sigma T} {(\sigma T) ^N \over N!
}
  e ^{-c{g(n)^ 2 \over 2n}}  \lk(1 - Cn ^{\gamma-\beta } \rk)
\\
&=&e ^{-\frac{c}{2}{n ^{2\gamma-1} }} \lk(1 - Cn ^{\gamma-\beta } \rk)e ^{-\sigma T} \sum_{N=1} ^{n^\gamma} {(\sigma T) ^N \over N!
}\to 1
\EEQS
as $n\to \infty$. Thus $\lim_{n\to\infty} \PP^L\lk( B_n\rk)=1$ and the proof is complete.

\del{
It remains to show (i) and (ii).
Given the jump times  $\{T_1,\cdots, T_N\}$ and large jumps $\{Y_1,\ldots,Y_n\}$,
we know from Step I that $u\in \mathfrak{X}_\lambda$.
In particular,
$$
\EE \int_0^ T |u(s)|^ q\, ds <\infty.
$$
In addition, $u$ solves 
\begin{align} 
\\
\nonumber
\lqq{
u(t)=S(t)u_0+\int_{0}^{t}S(t-s)F(u(s))\,ds}
&&
\\
\nonumber
&&{} +\int_{0}^{t}\int_Z S(t-s)G(s,u(s),z)\,\tilde \eta (dz,ds)
\\
\nonumber
&&{} +
\sum_{i=1}^ N 1_{[T_i,T]} (t)\CT(t-T_i)G_L(T^ -_i,u(T ^-_i),Y_i). 
\end{align}
Taking expectation leads to
\begin{align} 
\\
\nonumber
\lqq{
\EE\int_0^ T |u(t)|^ m\, dt =\EE\int_0^ T | (\mT u_0)(t) |^m\, dt +\EE \int_0^ T \lk|\lk(\mathfrak{F}\xi\rk) (t)\rk|^m}
&&
\\
\nonumber
&&{} +\EE\int_{0}^{T}\lk| \lk(\mathfrak{G}\xi\rk)(t) \rk|^m \, dt
 +
\int_0^ T \EE \lk|\sum_{i=1}^ N 1_{[T_i,T]} (t)\CT(t-T_i)G_L(T^ -_i,u(T ^-_i),Y_i)\rk|^m\, dt . 
\end{align}

\DEQSZ\label{mZlarge}
\\
\nonumber
\lqq{ (\mathfrak{Z}_0(\xi;(x_i)_{i=1}^ N)(t):= (\mT u_0)(t)+
\lk(\mathfrak{F}\xi\rk) (t)}
\\ &&{}\nonumber+
\lk( \mathfrak{G}\xi\rk)(t) + \sum_{i=1}^ N 1_{[T_i,T]}(t) (\mT_{T_i}  G_L(T^-_i,x_i,Y_i))(t)
, \quad t\in[0,T],\, \xi\in\mX_\lambda.
  \EEQSZ
}
%

}
\end{proof}

\section{Application to isotropic synchronous viscoelactic materials} \label{reaction1-s-t}
One motivation for this work are the linear models of viscoelasticity. In the linear theory for homogeneous isotropic viscoelastic the velocity fileld $v$ of the material occupying a bounded region $\mathcal{O}\subset \mathbb{R}^d$, $d=1,2,3$, with $C^1$ boundary, which rests up to $t=0$, is governed by
\begin{equation}\label{eq:viscoel}
\begin{aligned}
\dot{v}(x,t)&= \int_0^t \Delta v(x,t-s)\, da(s)\\
&+\int_0^t \nabla \nabla \cdot v(x,t-s)\, d(c(s)+\frac{1}{3}a(s))+g(x,t),\quad x\in \mathcal{O},
\end{aligned}
\end{equation}
where $g$ denotes an external body force, see, for example, \cite[Chapter 5]{pruss}. For simplicity, we set the density of the material equal to constant $1$. The functions $a$ and $c$ are called the shear modulus and the compression modulus, respectively. The material is called synchronous if for some $\gamma >0$ it holds that $c(t)=\gamma a(t)$. Suppose that $a(t)=\int_0^tb(s)\, ds$. In this case \eqref{eq:viscoel} simplifies to
\begin{equation}\label{eq:viscoel1}
\begin{aligned}
\dot{v}(x,t)&= \int_0^t \Delta v(x,t-s)\, b(s)ds\\
&+\int_0^t \nabla \nabla \cdot v(x,t-s)\, (\gamma+\frac{1}{3})b(s)ds+g(x,t),\quad x\in \mathcal{O},
\end{aligned}
\end{equation}
We consider the external body force to be abrupt in irregular time and space instances 
and it is modelled by  $g(x,t)=\dot{L}(x,t)$ where $L$ is a space time  $\beta$--stable L\'evy process with $\beta<1$, 
having only positive jumps. For the definition of the space time L\'evy noise, we refer to \cite[Section A.3, Definition A.13]{zpe}.
 We supplement \eqref{eq:viscoel1} by initial and Dirichlet zero boundary conditions:
\begin{equation}\label{eq:bc}
v\big|_{\partial \mathcal{O}}=0,\quad v(0,x)=v_0(x).
\end{equation}
Now, applying Theorem \ref{global} to \eqref{eq:viscoel1} with boundary conditions \eqref{eq:bc} with $g(x,t)=\dot{L}(t,x)y_0$, following result can be stated. For simplicity, we take $v_0=0$.
\begin{corollary}
Under the assumption above, if the kernel $b$ satisfies Assumptions \ref{ass2} with $\rho<\frac 4{d}$, then equation \eqref{eq:viscoel1} has a unique mild solution $u$ such that
for any $\alpha>\frac{d}{4}$, $\PP$-a.s.,\ $u\in\DD([0,T];H^{-2\alpha}_2(\CO))$.
\end{corollary}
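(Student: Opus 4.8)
The plan is to recast the viscoelasticity system \eqref{eq:viscoel1} with boundary conditions \eqref{eq:bc} into the abstract form \eqref{SDE-bigjumps} and then verify the hypotheses of Theorem \ref{global}. First I would set $H = L^2(\mathcal{O};\mathbb{R}^d)$ and let $A$ be the operator on $H$ associated with the elliptic part $v \mapsto \Delta v + (\gamma + \tfrac13)\nabla\nabla\cdot v$ under Dirichlet boundary conditions; by Korn's inequality (or directly, writing the form as $-\int |\nabla v|^2 - (\gamma+\tfrac13)\int |\nabla\cdot v|^2$ which is negative since $\gamma>0$) this operator is self-adjoint, densely defined, negative-definite, and has compact resolvent since $\mathcal{O}$ is bounded with $C^1$ boundary, so Assumption \ref{ass1} holds. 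The scalar kernel $b$ is assumed to satisfy Assumption \ref{ass2} by hypothesis, so the deterministic theory of Section \ref{deterministic} applies and the resolvent family $(S(t))_{t\ge0}$ with its smoothing estimates in Lemma \ref{smoothing} is available. The associated scale $H^A_\alpha$ coincides, up to equivalent norms, with (vector-valued) fractional Sobolev spaces; in particular $H^A_{-\alpha}$ embeds into $H^{-2\alpha}_2(\mathcal{O})$, which is the space in which we want the \cadlag paths.

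Next I would identify the noise. Since $L$ is a space-time $\beta$-stable \levy process with $\beta<1$ having only positive jumps, it is a pure-jump process and — as recalled in the paper — can be represented via a Poisson random measure. I would split it at jump size one: the small jumps ($|z|\le 1$) give a compensated Poisson random measure $\tilde\eta$ on a Banach space $Z$ (a suitable negative-order Sobolev space or $L^2$-type space on $\mathcal{O}$ carrying the spatial structure) with intensity $\nu$, and the large jumps ($|z|>1$) give a Poisson random measure $\eta_L$ on $Z_L$ with \emph{finite} intensity $\nu_L$. The coefficient maps are $F\equiv 0$, $G(t,u,z) = z$ (multiplication by the fixed spatial profile, or $G(t,u,z)=z\,y_0$ in the notation of the statement), and $G_L(t,u,z)=z$ likewise — these are affine, hence trivially Lipschitz in $u$ with constant $0$, so the Lipschitz parts of Assumption \ref{hypogeneral} and Assumption \ref{assbigjumps} are automatic (and $L_{G_L}(z)\le M(1+\|z\|)$ holds with room to spare). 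Because $L$ is $\beta$-stable, the tail bound $\nu_L(\{|z|>x\})\le Cx^{-\beta}$ in Assumption \ref{slowvarbigjumps} is exactly the classical stable-tail estimate, so that hypothesis holds with the same $\beta<1$. It remains to check the integrability conditions on $G$ with respect to $\nu$ and to fit the exponent budget.

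The crux is the bookkeeping of the regularity exponents, which is where the condition $\rho < 4/d$ enters. The small-jump measure $\nu$ of a $\beta$-stable noise with $\beta<1$ satisfies $\int 1\wedge|z|^p\,\nu(dz)<\infty$ for $p\in(\beta,1)\subset(\beta,2)$, so I would fix such a $p$ and then choose $q>p$ with $q$ a power of $p$ (or invoke the interpolation remark after \eqref{DD}) so that \eqref{DD} is available. The white-noise spatial profile lies in $H^{-s}_2(\mathcal{O})$ for every $s>d/2$ but not for $s=d/2$; hence $G$ maps into $L^p(Z,\nu;H^A_{-\alpha_G})\cap L^q(Z,\nu;H^A_{-\alpha_G})$ precisely when $\alpha_G > d/4$ (since $H^A_{-\alpha_G}$ corresponds to $H^{-2\alpha_G}_2$). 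The constraint from Theorem \ref{global} is $\alpha_G < \tfrac1{q\rho}$, and we also need $\alpha_I$ with $\alpha_I\ge\alpha_G$ and $\alpha_I<\tfrac1{q\rho}$ (here $F=0$ makes $(\alpha_F-\alpha_I)\rho<1-\tfrac1q$ trivial, and we may take $u_0=v_0=0$). So the requirement is the existence of $\alpha_G$ with $d/4 < \alpha_G < \tfrac1{q\rho}$, i.e. $q\rho\, d/4 < 1$; letting $q\downarrow 1$ (and $p\downarrow\beta$, which is allowed since $\beta<1$) this is possible exactly when $\rho < 4/d$. One then picks $\alpha$ with $\alpha\ge\alpha_I$ (so $\alpha>d/4$, matching the statement's "for any $\alpha>d/4$" after possibly enlarging $\alpha$) and $(\alpha_F-\alpha)\rho = -\alpha\rho<1-\tfrac1q$ automatically, and Theorem \ref{global} delivers the unique mild solution with $\PP$-a.s.\ paths in $\DD([0,T];H^A_{-\alpha})\hookrightarrow \DD([0,T];H^{-2\alpha}_2(\mathcal{O}))$. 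The main obstacle in a fully detailed write-up is not any single estimate but making the translation between the $H^A_\alpha$-scale of the vector Stokes-type operator and the standard spaces $H^{-2\alpha}_2(\mathcal{O})$ precise, and verifying carefully that the space-time $\beta$-stable noise of \cite{zpe} does decompose into the two Poisson random measures with the claimed intensity-tail behaviour; once that dictionary is in place, the corollary is a direct specialization of Theorem \ref{global} with $F=0$, $v_0=0$, and affine $G,G_L$.
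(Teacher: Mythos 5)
Your overall route is the same as the paper's: recast \eqref{eq:viscoel1} as \eqref{SDE-bigjumps}, split the stable noise into a compensated small-jump part and a finite-intensity large-jump part, take $G$ and $G_L$ to be (essentially) the identity in $z$, and run the exponent bookkeeping $d/4<\alpha_G<\tfrac{1}{q\rho}$ with $q$ close to $1$, which is exactly where $\rho<\tfrac4d$ is used. That part of your argument matches the paper. However, there are two genuine gaps. First, $F\equiv 0$ is not correct in the paper's setting: since $\beta<1$ and the noise has only positive jumps, the small-jump part of the stable process is naturally \emph{uncompensated} (it has finite variation), whereas \eqref{SDE-bigjumps} is written against the compensated measure $\tilde\eta$. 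Rewriting the uncompensated integral in compensated form produces the deterministic drift $F(t,v)(x)=\delta_x\, x_\nu$ with $x_\nu=\int_{(0,r]}y^{-\beta}\,dy$, which the paper includes explicitly (and which then forces the choice $\alpha_F=\alpha_I=\alpha_G$ so that \eqref{addasszweites} holds). Your phrase ``the small jumps give a compensated Poisson random measure'' quietly assumes away this correction term.

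Second, the step you defer as ``making the dictionary precise'' is in fact the bulk of the paper's proof: one must exhibit concrete Banach spaces $Z=Z_L=B^{-\theta_1}_{2,\infty}(\CO)$ and realize $\nu$ and $\nu_L$ as push-forwards of $\mbox{Leb}\times\nu^{\RR}$ under $\Phi(x,y)=y\,\delta_x$. This requires showing $\sup_{x}|\delta_x|_{B^{-d/2}_{2,\infty}(\RR^d)}<\infty$, the Lipschitz-type bound $\|\delta_a-\delta_b\|_{B^{-d/2-1}_{2,\infty}}\le C|a-b|$, and an interpolation argument giving continuity (hence measurability) of $\Phi$ into $B^{-s}_{2,\infty}$ for $s>d/2$, followed by the embedding $B^{-\theta_1}_{2,\infty}(\CO)\hookrightarrow H^{-\theta}_2(\CO)=H^A_{-\theta/2}$; only then do Assumptions \ref{hypogeneral}(i) and \ref{assbigjumps} actually make sense for $G(t,v,w)=w$. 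A minor further point: inequality \eqref{DD} is stated for $p\in[1,2]$, so you cannot let $p\downarrow\beta<1$; the correct choice (implicitly the paper's) is $p=1$, which is admissible because $\int_0^r y\cdot y^{-\beta-1}\,dy<\infty$ for $\beta<1$, and then $q\in(1,\tfrac{4}{d\rho})$.
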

\begin{proof}
{\rm
First, we split the noise into two noises, one with small, but bounded jumps, the other consisting of the large jumps.
Let $r>0$ be an arbitrary number,  $\eta$ be a space time Poisson random measure on $\CO\times \RR$  with jump intensity
$$\nu^\RR(U) := \int_{U\cap [0,r]} y^{-\beta-1}\, dy, \quad U\in\CB(\RR)
$$
and $\eta^\RR_L$ be a space time Poisson random measure\footnote{For the Definition of space time Poisson random measure we refer to  \cite[Section A.3, Definition A.16]{zpe} .} on $\CO\times \RR$  with jump intensity
$$\nu^\RR_L(U) := \int_{U\cap (r,\infty)} y^{-\beta-1}\, dy, \quad U\in\CB(\RR).
$$

First we verify the assumptions of Theorem \ref{local_ex}.
Let us put $H=L^2(\CO)$ and let  $m=\mbox{Leb}\times\nu^ \RR $ be a measure  on $\CO\times \RR$.
Consider the function
\begin{equation}\label{phidef}
\Phi:(x,y)\in\CO\times \RR \mapsto \Phi(x,y) := (\delta_x)\, y
\end{equation}
 It is not hard to see that $\Phi :\CO\times \RR \mapsto B_{2,\infty}^ {-\frac d2}(\CO)$ is measurable.
For the definition of the Besov spaces $B_{2,\infty}^ {-s}(\mathbb{R}^d)$ and $B_{2,\infty}^ {-s}(\CO)$ see, for example,  \cite[Appendix C]{zpe}.
Indeed, it follows from a straightforward calculation, see also, \cite[(C5)]{zpe}, that
\DEQSZ\label{zuinter}
\sup_{x\in \mathbb{R}^d}|\delta_x|_{B_{2,\infty}^ {-\frac d2}(\mathbb{R}^d)}<\infty.
\EEQSZ
\del{To show this, et us recall the
definition of the Besov spaces  as  given in \cite[Definition 2,
pp. 7-8]{Runst+Sickel_1996}. First we choose a function
$\psi\in{\mathcal{S}}(\mathbb{R}^d)$ such that $0\le \psi(x)\le 1$, $x\in
\mathbb{R}^d$ and
$$
\psi(x) = \left\{ \begin{array}{rcl} 1,&\mbox{ if } & |x|\le 1,\\
0&\mbox{ if } & |x|\ge \frac 32.
\end{array}\right.
$$
Then put \begin{eqnarray*}
 \left\{ \begin{array}{rcl}\phi_0(x) &=&\psi(x), \; x\in \mathbb{R}^d,
\\
\phi_1(x) &=&\psi(\frac x2)-\psi(x), \; x\in \mathbb{R}^d,
\\
\phi_j(x) &=&\phi_1(2 ^{-j+1} x),\; x\in \mathbb{R}^d, \quad
j=2,3,\ldots.
\end{array} \right.
\end{eqnarray*}
\begin{definition}
Let $s\in{\mathbb{R}}$,  $0<p\le \infty$ and $f \in {\mathcal{S}}^\prime(\mathbb{R}^d)$. If $0<q <
\infty$  we put
\begin{eqnarray*} \left| f\right|_{B^{s}_{p,q}} &=& \left( \sum_{ j=0} ^\infty  2
^{sjq}\left| {\mathcal{F}} ^{-1} \left[ \phi_j{\mathcal{F}} f\right] \right|_{L ^p} ^q\right)
^\frac 1q = \Vert\Big( 2 ^{sj}\left| {\mathcal{F}}
^{-1} \left[ \phi_j{\mathcal{F}} f\right]\right|_{L ^p}\Big)_{j\in \mathbb{N}} \Vert_{l^q}.
\end{eqnarray*}
If  $q=\infty$ we put
\begin{eqnarray*}
\left| f\right|_{B^{s}_{p,\infty}} &=& \sup_{j\in{\mathbb{N}}}  2 ^{sj}\left| {\mathcal{F}}
^{-1} \left[ \phi_j{\mathcal{F}} f\right]\right|_{L ^p}=\Vert \Big( 2 ^{sj}\left| {\mathcal{F}}
^{-1} \left[ \phi_j{\mathcal{F}} f\right]\right|_{L ^p}\Big)_{j\in \mathbb{N}} \Vert_{l^\infty} . \end{eqnarray*} We denote by
$B^{s}_{p,q}(\mathbb{R}^d)$ the space of all $f \in
{\mathcal{S}}^\prime(\mathbb{R}^d)$ for which $\left| f\right|_{B^{s}_{p,q}}$ is
finite.
\end{definition}
}
\del{First, note that for a sequence $a_n\to a$ we have
\begin{eqnarray*} 
|  {\mathcal{F}}^{-1}[\phi_j{\mathcal{F}}(\delta_{a_n}-\delta_a)|_{L ^p(\mathbb{R}^d)} &=&
(2\pi)^{-d/2}|  ({\mathcal{F}}^{-1}\phi_j)\ast (\delta_{a_n}-\delta_a)|_{L
^p(\mathbb{R}^d)} \\
\le C\,(2\pi)^{-d/2} |{\mathcal{F}}^{-1}\phi_j|_{L ^p(\mathbb{R}^d)}
&=&(2\pi)^{-d/2}2^{d(\frac1p-1)}2^{-jd(\frac1p-1)}  |{\mathcal{F}}^{-1}\phi_1|_{L
^p(\mathbb{R}^d)}.
 \end{eqnarray*}
Secondly, n}
Note that  $| \CF (\delta_a-\delta_b)(\xi)|\le c\, |\xi|\,|a-b|$, where $\CF$ denotes the Fourier transform, and  therefore, a short calculation yields that
$$
\|\delta_a-\delta_b\|_{B_{2,\infty}^ {-\frac d2-1}(\mathbb{R}^d)}\le C |a-b|.
$$
%
%
It follows from above by interpolation using in addition \eqref{zuinter},  that for any $s>\frac d2$, $a\to \delta_{a}$ is continuous in $B_{2,\infty}^ {-s}(\mathbb{R}^d)$. 
The continuity of the product $(x,y)\in \mathbb{R}^d\times \mathbb{R}\to (\delta_x)y$ in $B_{2,\infty}^ {-s}(\mathbb{R}^d)$ now easily follows. Finally if $a,b\in \mathcal{O}$, then the support of $y_1\delta_a-y_2\delta_b$ is in $\mathcal{O}$ and hence, using the definition of $B_{2,\infty}^ {-s}(\CO)$, see, for example \cite[Definition 4.2.1]{Triebel_1995}, it follows that
$$
\|y_1\delta_a-y_2\delta_b\|_{B_{2,\infty}^ {-s}(\CO)}\leq \|y_1\delta_a-y_2\delta_b\|_{B_{2,\infty}^ {-s}(\mathbb{R}^d)}
$$
and thus $\Phi$ defined in \eqref{phidef} is continuous from $\CO\times \RR$  to $B_{2,\infty}^ {-s}(\CO)$ hence measurable.
Therefore, $\Phi$ induces a (L\'evy) measure $\nu$ on the space $B^ {-s}_{2,\infty}(\CO)$ (push forward measure) by setting
$$
 \nu (U) := \int_{\CO\times \RR} 1_{U}(\Phi (x,y))\,dx\, \nu^\RR(dy),\quad U\in \mathcal{B}( B^ {-s}_{2,\infty}(\CO)).
$$


 Secondly, by  Sobolev embedding theorems (see \cite[p.\ 29, Chapter 2.2]{runst})
it follows that for any $\theta>\frac d2 $ there exists a $\theta_1>\frac d2$ such that we have
\DEQSZ \label{oooben}
B_{2,\infty}^ {-\theta_1}(\CO)\hookrightarrow H_2^{- \theta}(\CO)
\EEQSZ
continuously. In addition,  since
$
D(A)=H^ 2 _{2,0}(\CO)
$
we have $H^ {-\theta}_2(\CO)=H^A_{-\frac \theta 2}$.
Let $\theta$ and $\theta_1>\frac d2$ be chosen such that \eqref{oooben} holds.
Now, since the embedding from $B^ {-\frac d2}_{2,\infty}(\CO)$ into $H^ {-\theta}_2(\CO)$ is continuous, and $H^ {-\theta}_2(\CO)=H^A_{-\frac \theta 2}$,
for $Z= B^{-\theta_1}_{2,\infty}(\CO)$ the mapping
\DEQSZ\label{defG}
G: [0,T]\times H\times Z  &\longrightarrow &H^A_{-\frac \theta 2} 
\\\nonumber
(t,v,w) & \mapsto &   \,w,
\EEQSZ
is  Lipschitz continuous. Let us put $\alpha_G=\frac \theta 2>\frac d4$.
The mapping $G$ satisfies   Assumption
 \ref{hypogeneral}-(i), if
there exists some $q>p$ such that $\alpha_G< \frac 1 {q\rho}$.
Since $\nu^ \RR$ has bounded moments of all order and $\rho<\frac 4{d}$, such a $q$ exists.

As we have only positive jumps, we have to add and subtract the compensator. Hence, defining a mapping
\DEQSZ\label{defFF}
F: [0,T]\times H&\longrightarrow & H_{-\alpha_F}^ A
\EEQSZ
by $F(t,v)(x)=\delta_x x_\nu$, where $x_\nu=\int_{(0,b]} y^ {-\beta}\, dy$.
Setting $\alpha_I=\alpha_G$, we know $\alpha_I=\alpha_F$ and therefore, condition \eqref{addasszweites} is trivially satisfied.
Again, it is straightforward
to show that for $\alpha_F=\alpha _G$ the mapping $F$ is  Lipschitz continuous and  Assumption \ref{hypogeneral} (ii) is satisfied.

As $v_0=0$, Assumption \ref{hypogeneral} (iii) is fulfilled with $\alpha_I=\alpha_G$.

Next, we have to show that Assumption \ref{assbigjumps} holds for bigger jumps. Let us put $Z_L= B^{-\theta_1}_{2,\infty}(\CO)$
and $m_L=\mbox{Leb}\times\nu_L^ \RR $ be a measure on $\CO\times \RR$.
By arguing as before, one can show that $\Phi$ defined in \eqref{phidef}
induces a L\'evy measure $\nu_L$ on the space $B^ {-\theta_1}_{2,\infty}(\CO)$ by setting
$$
 \nu_L (U) := \int_{\CO\times \RR} 1_{U}(\Phi (x,y))\, \nu^\RR_L(dy)\, dx,\quad U\in \mathcal{B}( B^ {-\theta_1}_{2,\infty}(\CO)).
$$
 Then,  Assumption
\ref{slowvarbigjumps} is satisfied with $\beta$ being the index of the L\'evy process.

The mapping
\DEQSZ\label{resf}
G_L: [0,T]\times H\times Z_L  &\longrightarrow &  H_2^{- \theta}(\CO)
\\
(t,v,z) & \mapsto & \, z,\nonumber
\EEQSZ
clearly satisfies Assumption \ref{assbigjumps}.
%
%
%
Let us denote by $\eta$ the space time Poisson random measure with intensity $\nu$, $\tilde \eta$ the compensated  space time Poisson random measure and
by $\eta_L$ the  space time Poisson random measure with intensity $\eta_L$.
In this way, equation \eqref{eq:viscoel1} can be written in the following form
 \begin{equation}
 \left\{ \begin{aligned} du(t) & = \lk( A\int_0 ^t b(t-s) u(s)\,ds\rk) \, dt  + F(t,u(t))\,dt \\
& {} + \int_Z G(t,u(t)z)\,\tilde \eta(dz,dt)+
\int_{Z_L} G_L(t,u(t),z)\, \eta_L(dz,dt);\,  t\in (0,T],\\
u(0)&=u_0, \end{aligned} \right.
\end{equation}
where $G$, $G_L$ and $F$ are satisfying the assumptions of
 Theorem \ref{global}. Hence, we get  existence and uniqueness of a mild solution with \cadlag paths in $H_{2}^ {-2\alpha}(\CO)$
for  $\alpha>\frac{d}{4}$. \del{is chosen such that
\DEQSZ\label{musssat}
 \left((\alpha_G-\alpha)\rho+1\right)  \frac {pq}{q-p}< 1
\EEQSZ
As $p=1$ and  $q$ can be taken arbitrary large $\alpha$ has to satisfy $\left(\alpha_G-\alpha)\rho+1\right)<1$; that is,
$\alpha>\alpha_G>\frac{d}{4}$. In particular, condition \eqref{musssat} holds and for any $\alpha>\frac d4$ and
the solution process $u$ is \cadlag in $H_{2}^ {-2\alpha}(\CO)$.}
}
\end{proof}

\section*{References}

\end{document}